\documentclass[a4paper,11pt]{amsart}
\usepackage{mathrsfs}
\usepackage{cases}

\usepackage{color}
\usepackage{colordvi}
\usepackage{amsfonts}

\usepackage{graphicx}
\usepackage{amsmath}
\usepackage{amssymb}
\usepackage[all]{xypic}
\usepackage[all]{xy}
\oddsidemargin 0cm \evensidemargin -0cm \textwidth 16cm

\begin{document}
\input xy
\xyoption{all}

\renewcommand{\mod}{\operatorname{mod}\nolimits}
\newcommand{\proj}{\operatorname{proj}\nolimits}
\newcommand{\rad}{\operatorname{rad}\nolimits}
\newcommand{\Gproj}{\operatorname{Gproj}\nolimits}
\newcommand{\Ginj}{\operatorname{Ginj}\nolimits}
\newcommand{\Gd}{\operatorname{Gd}\nolimits}
\newcommand{\soc}{\operatorname{soc}\nolimits}
\newcommand{\ind}{\operatorname{inj.dim}\nolimits}
\newcommand{\Top}{\operatorname{top}\nolimits}
\newcommand{\ann}{\operatorname{Ann}\nolimits}
\newcommand{\id}{\operatorname{id}\nolimits}
\newcommand{\Id}{\operatorname{id}\nolimits}
\newcommand{\Mod}{\operatorname{Mod}\nolimits}
\newcommand{\End}{\operatorname{End}\nolimits}
\newcommand{\Ob}{\operatorname{Ob}\nolimits}
\newcommand{\Ht}{\operatorname{Ht}\nolimits}
\newcommand{\cone}{\operatorname{cone}\nolimits}
\newcommand{\rep}{\operatorname{rep}\nolimits}
\newcommand{\Ext}{\operatorname{Ext}\nolimits}
\newcommand{\Hom}{\operatorname{Hom}\nolimits}
\newcommand{\RHom}{\operatorname{RHom}\nolimits}
\renewcommand{\Im}{\operatorname{Im}\nolimits}
\newcommand{\Ker}{\operatorname{Ker}\nolimits}
\newcommand{\Coker}{\operatorname{coker}\nolimits}
\renewcommand{\dim}{\operatorname{dim}\nolimits}
\newcommand{\Ab}{{\operatorname{Ab}\nolimits}}
\newcommand{\Coim}{{\operatorname{Coim}\nolimits}}
\newcommand{\pd}{\operatorname{proj.dim}\nolimits}
\newcommand{\sdim}{\operatorname{sdim}\nolimits}
\newcommand{\add}{\operatorname{add}\nolimits}
\newcommand{\pr}{\operatorname{pr}\nolimits}
\newcommand{\Tr}{\operatorname{Tr}\nolimits}
\newcommand{\Def}{\operatorname{Def}\nolimits}
\newcommand{\Gp}{\operatorname{Gproj}\nolimits}

\newcommand{\ca}{{\mathcal A}}
\newcommand{\cb}{{\mathcal B}}
\newcommand{\cc}{{\mathcal C}}
\newcommand{\cd}{{\mathcal D}}
\newcommand{\cg}{{\mathcal G}}
\newcommand{\cp}{{\mathcal P}}
\newcommand{\ce}{{\mathcal E}}
\newcommand{\cs}{{\mathcal S}}
\newcommand{\cm}{{\mathcal M}}
\newcommand{\cn}{{\mathcal N}}
\newcommand{\cx}{{\mathcal X}}
\newcommand{\ct}{{\mathcal T}}
\newcommand{\cu}{{\mathcal U}}
\newcommand{\co}{{\mathcal O}}
\newcommand{\cv}{{\mathcal V}}
\newcommand{\calr}{{\mathcal R}}
\newcommand{\ol}{\overline}
\newcommand{\ul}{\underline}
\newcommand{\st}{[1]}
\newcommand{\ow}{\widetilde}
\newcommand{\coh}{{\mathrm coh}}
\newcommand{\CM}{{\mathrm CM}}
\newcommand{\vect}{{\mathrm vect}}

\def \A{{\Bbb A}}
\newcommand{\bp}{{\mathbf p}}
\newcommand{\bL}{{\mathbf L}}
\newcommand{\bS}{{\mathbf S}}

\newtheorem{theorem}{Theorem}[section]
\newtheorem{acknowledgement}[theorem]{Acknowledgement}
\newtheorem{algorithm}[theorem]{Algorithm}
\newtheorem{axiom}[theorem]{Axiom}
\newtheorem{case}[theorem]{Case}
\newtheorem{claim}[theorem]{Claim}
\newtheorem{conclusion}[theorem]{Conclusion}
\newtheorem{condition}[theorem]{Condition}
\newtheorem{conjecture}[theorem]{Conjecture}
\newtheorem{construction}[theorem]{Construction}
\newtheorem{corollary}[theorem]{Corollary}
\newtheorem{criterion}[theorem]{Criterion}
\newtheorem{definition}[theorem]{Definition}
\newtheorem{example}[theorem]{Example}
\newtheorem{exercise}[theorem]{Exercise}
\newtheorem{lemma}[theorem]{Lemma}
\newtheorem{notation}[theorem]{Notation}
\newtheorem{problem}[theorem]{Problem}
\newtheorem{proposition}[theorem]{Proposition}
\newtheorem{remark}[theorem]{Remark}
\newtheorem{solution}[theorem]{Solution}
\newtheorem{summary}[theorem]{Summary}
\newtheorem*{thm}{Theorem}
\newtheorem*{thma}{Theorem A}
\newtheorem*{thmb}{Theorem B}
\newtheorem*{thmc}{Theorem C}
\newtheorem*{thm1}{Main Theorem 1}
\newtheorem*{thm2}{Main Theorem 2}
\def \Z{{\Bbb Z}}
\def \X{{\Bbb X}}
\renewcommand{\L}{{\Bbb L}}
\renewcommand{\P}{{\Bbb P}}
\newcommand {\lu}[1]{\textcolor{red}{$\clubsuit$: #1}}
\title[Gorenstein defect categories of triangular matrix algebras]{Gorenstein defect categories of triangular matrix algebras}

\author[Lu]{Ming Lu}
\address{Department of Mathematics, Sichuan University, Chengdu 610064, P.R.China}
\email{luming@scu.edu.cn}

\subjclass[2000]{18E30, 18E35}
\keywords{Gorenstein defect category, triangular matrix algebra, Recollement, Gorenstein algebra}

\begin{abstract}
We apply the technique of recollement to study the Gorenstein defect categories of triangular matrix algebras. First, we construct a left recollement of Gorenstein defect
categories for a triangular matrix algebra under some conditions, using it, we give a categorical interpretation of
the Gorenstein properties of the triangular matrix algebra obtained by X-W. Chen, B. L. Xiong and P. Zhang respectively. Second, under some additional conditions, a recollement of Gorenstein defect
categories for a triangular matrix algebra is constructed. As an application, for a special kind of triangular matrix algebras, which are called simple gluing algebras, we describe their singularity categories and Gorenstein defect categories.
\end{abstract}

\maketitle

\section{Introduction}

In the study of B-branes on Landau-Ginzburg models in the framework of Homological Mirror Symmetry Conjecture, D. Orlov rediscovered the notion of singularity categories \cite{Or1,Or2,Or3}. The singularity category of an algebra $A$ is defined to be the Verdier quotient of the bounded derived category with respect to the thick subcategory formed by complexes isomorphic to those consisting of finitely generated projective modules, \cite{Bu}. It measures the homological singularity of an algebra in the sense that an algebra $A$ has finite global dimension if and only if its singularity category vanishes.

The singularity category captures the stable homological features of an algebra \cite{Bu}. A fundamental result of R. Buchweitz \cite{Bu} and D. Happel \cite{Ha1} states that for a Gorenstein algebra $A$, the singularity category is triangle equivalent to the stable category of Gorenstein projective (also called (maximal) Cohen-Macaulay) $A$-modules. Buchweitz's Theorem (\cite[Theorem 4.4.1]{Bu}) says that there is an exact embedding $\Phi:\underline{\Gp}A\rightarrow D_{sg}(A)$ given by $\Phi(M)=M$, where the second $M$ is the corresponding stalk complex at degree $0$, and $\Phi$ is an equivalence if and only if $A$ is Gorenstein. Recently, to provide a categorical characterization of Gorenstein algebras, P. A. Bergh, D. A. J{\o}rgensen and S. Oppermann \cite{BJO} defined the Gorenstein defect category $D_{def}(A):=D_{sg}(A)/\Im \Phi$ and proved that $A$ is Gorenstein if and only if $D_{def}(A)=0$. In general, it is difficult to describe the singularity categories and Gorenstein defect categories. Many people are trying to describe these categories for some special kinds of algebras, see e.g. \cite{Chen1,chen2,chen3,Ka,CGLu,CDZ}. In particular, for a CM-finite algebra $A$, F. Kong and P. Zhang \cite{KZ} proved that its Gorenstein defect category is equivalent to the singularity category of its Cohen-Macaulay Auslander algebra. Recently, X-W. Chen \cite{chen3} described the singularity category and Gorenstein defect category for a quadratic monomial algebra. For a triangular matrix algebra $\Lambda=\left( \begin{array}{cc} A&M\\0&B \end{array}\right)$ with bimodule $_AM_B$, X-W. Chen \cite{Chen1}, B. L. Xiong and P. Zhang \cite{XZ} obtained sufficient and necessary conditions for $\Lambda$ to be a Gorenstein algebra.

In this paper, we mainly consider the Gorenstein defect category of a triangular matrix algebra by using the technique of recollement.
A recollement of triangulated categories is a diagram
\[\xymatrix{
\mathcal{D}'\ar[r]^{i_*}&\mathcal{D}\ar@<2ex>[l]^{i^!}\ar@<-3ex>[l]_{i^*}\ar[r]^{j^*}&\mathcal{D}''
\ar@<2ex>[l]^{j_*}\ar@<-3ex>[l]_{j_!}
}
\]of triangulated categories and functors satisfying various conditions, which describes the middle term as being ``glued together" from a triangulated subcategory and another one. The recollement setup was first introduced by A. Beilinson, J. Bernstein and P. Deligne in \cite{BBD}, which plays an important role in  algebraic geometry and representation theory, see for instance \cite{MV, CPS1, CPS2, J, Ko, Zhang2}. Another interesting structure is the left recollement, which is a part of recollement involving only the functors $i^*,i_*,j_!,j^*$, see e.g. \cite{Zhang2}.

It is well known that for a triangular matrix algebra $\Lambda=\left( \begin{array}{cc} A&M\\0&B \end{array}\right)$, $D^b(\Lambda)$ admits a recollement relative to $D^b(A)$ and $D^b(B)$ if $\pd M_B<\infty$, see e.g. \cite{CL}.
P. Zhang proved that if $\Lambda$ is Gorenstein and $_AM$ is projective, then $\underline{\Gp}\Lambda$ admits a recollement relative to $\underline{\Gp}A$ and $\underline{\Gp}B$. The author together with P. Liu \cite{LL} generalized this to consider the singularity categories, proved that $D_{sg}(\Lambda)$ admits a recollement relative to $D_{sg}(A)$ and $D_{sg}(B)$ if $\pd_A M<\infty$ and $\pd M_B<\infty$ for any Artin rings $A$ and $B$.
In \cite{LL}, the machinery of localization to the three categories in a recollement is introduced, and some sufficient conditions for the quotient categories to form a new recollement are found. We refer the reader to \cite{Kra} for localization theory of triangulated categories.

The aim of the present article is to study the Gorenstein defect categories for triangular matrix algebra $\Lambda$. Let $\Lambda=\left( \begin{array}{cc} A&M\\0&B \end{array}\right)$ be a triangular matrix algebra with $\pd M_B<\infty$. First, if $\pd _AM<\infty$ or $\ind _A M<\infty$, then we have a left recollement of $D_{def}(\Lambda)$ relative to $D_{def}(A)$ and $D_{def}(B)$, see Proposition \ref{proposition left recollement}. If additionally $A$ (resp. $B$) is Gorenstein, then $D_{def}(\Lambda)$ is triangle equivalent to $D_{def}(B)$ (resp. $D_{def}(A)$), see Theorem \ref{theorem characterize of gorenstein property}. This gives a categorical interpretation of the suffiecient and necessary conditions for $\Lambda$ to be a Gorenstein algebras in \cite{Chen1,XZ}, see Corollary \ref{corollary triangular matrix algebra to be Gorenstein}. Second, we construct a recollement for the Gorenstein defect category $D_{def}(\Lambda)$ under some conditions, explicitly, 
if $_AM$ is projective and $M_B$ has finite projective dimension, and $\Hom_A(M,A) \in (\Gproj B)^\bot$, or in particular either $\pd_B\Hom_A(M,P)<\infty$ or $\ind_B\Hom_A(M,P)<\infty$ for every indecomposable projective $A$-module $P$, then $D_{def}(\Lambda)$ admits a recollement relative to $D_{def}(A)$ and $D_{def}(B)$, see Theorem \ref{theorem recollement of Gorenstein defect categories}. As a corollary, $D_{def}(\left( \begin{array}{cc} A&A\\0&A \end{array}\right))$ admits a recollement relative to $D_{def}(A)$ and $D_{def}(A)$, see Corollary \ref{corollary special triangle matrix algebra}. Finally, as an application, we consider a special kind of triangular matrix algebras, which are called simple gluing algebras, and get that
$D_{sg}(\Lambda)\simeq D_{sg}(A)\coprod D_{sg}(B)$ and $D_{def}(\Lambda)\simeq D_{def}(A)\coprod D_{def}(B)$, see Proposition \ref{lemma recollement splits} and Theorem \ref{theorem defect Gorenstein split}.

\vspace{0.2cm} \noindent{\bf Acknowledgments.}
The author thanks Professor Liangang Peng very much for his guidance and constant support. The author thanks the referee for very helpful and insightful comments. The author was supported by the National Natural Science Foundation of China (No. 11401401 and No. 11601441).

\section{preliminary}
In this paper, we always assume that $R$ is a commutative Artinian ring and all algebras are Artin  $R$-algebras and modules are considered to be finitely generated. We denote by $\mod A$ the category of finitely generated left $A$-modules. Right $A$-modules are viewed as left $A^{op}$-modules, here $A^{op}$ denotes the opposite algebra of $A$. In what follows, $A$-modules always mean left $A$-modules.

We denote by $D:\mod A \rightarrow \mod A^{op}$ the duality functor $\Hom_R(-,E)$, where $E$ is the minimal injective cogenerator for $R$. Note that $D$ is isomorphic to $\Hom_A(-,D(A_A))$, and $D(A_A)$ is an injective cogenerator for $\mod A$. For an arbitrary $A$-module $_AX$ we denote by $\pd_AX$ (resp. $\ind_AX$) the projective dimension (resp. the injective dimension) of the module $_AX$.

\subsection{Recollements}
We recall the definition of recollement for triangulated categories from \cite{BBD}.

\begin{definition}[\cite{BBD}]
A recollement of triangulated categories is a diagram of triangulated
categories and triangulated functors
\[
\xymatrix{
\mathcal{D}'\ar[r]^{i_*=i_!}&\mathcal{D}\ar@<2ex>[l]^{i^!}\ar@<-3ex>[l]_{i^*}\ar[r]^{j^*=j^!}&\mathcal{D}''
\ar@<2ex>[l]^{j_*}\ar@<-3ex>[l]_{j_!} \quad(\dag)
}
\]
satisfying
\item[(R1)] $(i^*,i_*)$, $(i_*,i^!)$, $(j_!,j^*)$ and $(j^*,j_*)$ are adjoint pairs;
\item[(R2)] $i_*$, $j_!$ and $j_*$ are full embeddings;
\item[(R3)] $j^*i_*=0$ (and hence, by adjoint properties, $i^*j_!=0$ and $i^!j_*=0$);
\item[(R4)] Each object $X$ in $\cd$ determines distinguished triangles
\[i_!i^!X\to X\to j_*j^*X\to \cdot\ \text{and}\ j_!j^!X\to X\to i_*i^*X\to\cdot
\]in $\cd$, where the arrows to and from $X$ are counit and unit morphisms.
\end{definition}

Parallel to the recollement of triangulated categories, \emph{recollement of abelian categories} can be dated back to \cite{BBD} in the construction of the category of perverse sheaves on a singular space, we refer to \cite{FP} for its definition and basic properties. Let $\cd,\cd',\cd''$ be abelian categories. The diagram $(\dag)$ of additive functors is an abelian category recollement of $\cd$ relative to $\cd'$ and $\cd''$, if $(R1),(R2)$ and $(R5)$ are satisfied, where
\begin{itemize}
\item[(R5)] $\Im i_*=\ker j^*$.
\end{itemize}

The following theorem is useful to construct recollements of quotient categories.
\begin{theorem}[\cite{LL}]\label{theorem 1}
Let \[\xymatrix{
\mathcal{D}'\ar[r]^{i_*}&\mathcal{D}\ar@<2ex>[l]^{i^!}\ar@<-3ex>[l]_{i^*}\ar[r]^{j^*}&\mathcal{D}''
\ar@<2ex>[l]^{j_*}\ar@<-3ex>[l]_{j_!}
}
\]be a recollement of triangulated categories and $\ct$ be a thick subcategory of $\cd$. Let $\mathcal{T}_1=i^*\mathcal{T}$ and $\mathcal{T}_2=j^*\mathcal{T}$.  If $i_*i^*\mathcal{T}\subseteq \mathcal{T}$ and $j_*j^*\mathcal{T}\subseteq \mathcal{T}$, then there exists a recollement of localizations
\[\xymatrix{\mathcal{D}'/\mathcal{T}_1\ar[r]^{\widetilde{i}_*}&\mathcal{D}/\mathcal{T}
\ar@<2ex>[l]^{\widetilde{i}^!}\ar@<-3ex>[l]_{\widetilde{i}^*}\ar[r]^{\widetilde{j}^*}&\mathcal{D}''/\mathcal{T}_2.
\ar@<2ex>[l]^{\widetilde{j}_*}\ar@<-3ex>[l]_{\widetilde{j}_!}  }
\]
\end{theorem}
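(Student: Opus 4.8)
The plan is to descend each of the six functors of the recollement along the Verdier localizations $Q\colon\cd\to\cd/\ct$, $Q'\colon\cd'\to\cd'/\ct_1$, $Q''\colon\cd''\to\cd''/\ct_2$, and then to verify the axioms (R1)--(R4) for the resulting functors. The general principle used throughout is that a triangulated functor $F\colon\cd_1\to\cd_2$ with $F\cs_1\subseteq\cs_2$ induces a unique triangulated functor $\bar F\colon\cd_1/\cs_1\to\cd_2/\cs_2$ satisfying $\bar FQ_1=Q_2F$; so the first job is to establish the six inclusions $i^*\ct\subseteq\ct_1$, $i_*\ct_1\subseteq\ct$, $i^!\ct\subseteq\ct_1$, $j_!\ct_2\subseteq\ct$, $j^*\ct\subseteq\ct_2$, $j_*\ct_2\subseteq\ct$. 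The first and fifth are immediate from the definitions $\ct_1=i^*\ct$ and $\ct_2=j^*\ct$; the second is precisely the hypothesis $i_*i^*\ct\subseteq\ct$; and the sixth is precisely the hypothesis $j_*j^*\ct\subseteq\ct$.

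The two remaining inclusions exploit the recollement triangles. Given $X\in\ct$, apply $i^*$ to the triangle $i_*i^!X\to X\to j_*j^*X\to$; since $i^*i_*\cong\id_{\cd'}$ (because $i_*$ is a full embedding) this produces a triangle $i^!X\to i^*X\to i^*j_*j^*X\to$ in $\cd'$. Here $i^*X\in i^*\ct=\ct_1$, and $j^*X\in\ct_2$ forces $j_*j^*X\in j_*\ct_2=j_*j^*\ct\subseteq\ct$, whence $i^*j_*j^*X\in i^*\ct=\ct_1$; as $\ct_1$ is a triangulated subcategory we conclude $i^!X\in\ct_1$, so $i^!\ct\subseteq\ct_1$. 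Dually, from the triangle $j_!j^*X\to X\to i_*i^*X\to$: for $X\in\ct$ one has $i_*i^*X\in\ct$ by hypothesis, hence $j_!j^*X\in\ct$, i.e. $j_!\ct_2=j_!j^*\ct\subseteq\ct$. One should also check that $\ct_1=i^*\ct$ and $\ct_2=j^*\ct$ really are triangulated subcategories; closure under shifts is clear, and for closure under cones one uses $\Hom_{\cd'}(i^*X,i^*Y)\cong\Hom_\cd(X,i_*i^*Y)$ with $i_*i^*Y\in\ct$ to see that, up to the counit isomorphism, every morphism of $\ct_1$ is of the form $i^*(\text{morphism of }\ct)$; replacing $\ct_1,\ct_2$ by their thick closures does not change the quotient categories, so we may assume they are thick. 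Hence all six functors descend to triangulated functors $\widetilde{i^*},\widetilde{i^!},\widetilde{j^*},\widetilde{j_!},\widetilde{j_*}$ and $\widetilde{i_*}=\widetilde{i_!}$.

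The axioms are then formal. For (R1): if $(F,G)$ is a triangulated adjoint pair with unit $\eta$, counit $\varepsilon$, and both $F,G$ descend, then, using $\widetilde G\widetilde FQ_1=Q_1GF$ and $\widetilde F\widetilde GQ_2=Q_2FG$, the transformations $Q_1\eta$ and $Q_2\varepsilon$ yield a unit $\id\to\widetilde G\widetilde F$ and a counit $\widetilde F\widetilde G\to\id$, and the triangle identities descend; so $(\widetilde{i^*},\widetilde{i_*})$, $(\widetilde{i_*},\widetilde{i^!})$, $(\widetilde{j_!},\widetilde{j^*})$, $(\widetilde{j^*},\widetilde{j_*})$ are adjoint pairs. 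For (R2): $i_*,j_!,j_*$ being full embeddings amounts to the counit $i^*i_*\to\id$, the unit $\id\to j^*j_!$, and the counit $j^*j_*\to\id$ being isomorphisms; localization preserves isomorphisms, and by the previous paragraph these are exactly the (co)units of the descended adjunctions after applying $Q'$, $Q''$, $Q''$, so the descended (co)units are isomorphisms and $\widetilde{i_*},\widetilde{j_!},\widetilde{j_*}$ are full embeddings. For (R3): from $j^*i_*=0$ we get $\widetilde{j^*}\widetilde{i_*}Q'=\widetilde{j^*}Qi_*=Q''j^*i_*=0$, hence $\widetilde{j^*}\widetilde{i_*}=0$, and $\widetilde{i^*}\widetilde{j_!}=0$, $\widetilde{i^!}\widetilde{j_*}=0$ follow from the adjunctions as in the definition. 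For (R4): given $\bar X\in\cd/\ct$, write $\bar X=QX$ and apply the triangulated functor $Q$ to the two recollement triangles attached to $X$ in $\cd$; since $Q(i_*i^!X)=\widetilde{i_*}\widetilde{i^!}(QX)$, $Q(j_*j^*X)=\widetilde{j_*}\widetilde{j^*}(QX)$, $Q(j_!j^*X)=\widetilde{j_!}\widetilde{j^*}(QX)$, $Q(i_*i^*X)=\widetilde{i_*}\widetilde{i^*}(QX)$, and $Q$ carries the (co)unit maps to the descended (co)unit maps, the images are the two required distinguished triangles in $\cd/\ct$.

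The heart of the argument is that $i^!$ and $j_!$ descend, i.e. $i^!\ct\subseteq\ct_1$ and $j_!\ct_2\subseteq\ct$; the hypotheses $i_*i^*\ct\subseteq\ct$ and $j_*j^*\ct\subseteq\ct$ are tailored precisely to force these through the two recollement triangles, and this is the step I expect to be the main obstacle. The rest is soft: the remaining four inclusions are definitional or are the hypotheses themselves, and (R1)--(R4) for the descended functors reduce, via the universal property of Verdier localization, to the corresponding facts in the original recollement. A minor but worth-writing-out technical point is the bookkeeping around thickness of $\ct_1,\ct_2$ and the identities $\widetilde{i^*}\widetilde{i_*}=\widetilde{i^*i_*}$ (and its analogues) underlying the ``(co)unit is an isomorphism'' step in (R2).
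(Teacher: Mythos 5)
Your argument is correct, and it is the standard proof of this result; the paper itself gives no proof here but simply cites \cite{LL}, where the argument runs along exactly these lines (the key point being that the hypotheses $i_*i^*\ct\subseteq\ct$ and $j_*j^*\ct\subseteq\ct$, fed into the two recollement triangles, force $i^!\ct\subseteq\ct_1$ and $j_!\ct_2\subseteq\ct$, after which everything descends formally through the Verdier localizations). Your handling of the technical point that $\ct_1=i^*\ct$ and $\ct_2=j^*\ct$ are triangulated subcategories, via the adjunction isomorphisms $\Hom_{\cd'}(i^*X,i^*Y)\cong\Hom_{\cd}(X,i_*i^*Y)$ with $i_*i^*Y\in\ct$, is also the right one.
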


A \emph{left recollement} of a triangulated category $\cd$ relative to triangulated categories $\cd'$ and $\cd''$ is a diagram of exact functors consisting of the upper two rows of $(\dag)$, satisfying all conditions which involve only the functors $i^*,i_*,j_!,j^*$.

Similar to the proof of Theorem \ref{theorem 1} in \cite{LL}, we get the following result about the left recollement of localizations.

\begin{lemma}\label{theorem left recollement}
Let \[\xymatrix{
\mathcal{D}'\ar[r]^{i_*}&\mathcal{D}\ar@<-3ex>[l]_{i^*}\ar[r]^{j^*}&\mathcal{D}''
\ar@<-3ex>[l]_{j_!}
}
\]be a left recollement of triangulated categories and $\ct$ be a thick subcategory of $\cd$. Let $\mathcal{T}_1=i^*\mathcal{T}$ and $\mathcal{T}_2=j^*\mathcal{T}$.  If $i_*i^*\mathcal{T}\subseteq \mathcal{T}$, then there exists a left recollement of localizations
\[\xymatrix{\mathcal{D}'/\mathcal{T}_1\ar[r]^{\widetilde{i}_*}&\mathcal{D}/\mathcal{T}
\ar@<-3ex>[l]_{\widetilde{i}^*}\ar[r]^{\widetilde{j}^*}&\mathcal{D}''/\mathcal{T}_2.
\ar@<-3ex>[l]_{\widetilde{j}_!}  }
\]
\end{lemma}

\subsection{Gorenstein projective modules and Gorenstein algebras}
Let $A$ be an Artin algebra.
A complex $$P^\bullet:\cdots\rightarrow P^{-1}\rightarrow P^0\xrightarrow{d^0}P^1\rightarrow \cdots$$ of finitely generated projective $A$-modules is said to be \emph{totally acyclic} provided it is acyclic and the Hom complex $\Hom_A(P^\bullet,A)$ is also acyclic \cite{AM}.
An $A$-module $M$ is said to be (finitely generated) \emph{Gorenstein projective} provided that there is a totally acyclic complex $P^\bullet$ of projective $A$-modules such that $M\cong \Ker d^0$ \cite{EJ}. We denote by $\Gproj A$ the full subcategory of $\mod A$ consisting of Gorenstein projective modules.

Let $\cx$ be a subcategory of $\mod A$. Then $^\bot\cx:=\{M|\Ext_A^i(M,X)=0, \mbox{ for all } X\in\cx, i\geq1\}$. Dually, we can define $\cx^\bot$. In particular, we define $^\bot A:=^\bot
(\proj A)$.

The following lemma follows directly from the definition of Gorenstein projective modules.
\begin{lemma}\label{lemma property of Gorenstein projective modules} Let $A$ be an Artin algebra. Then

(a) \cite{Be}
\begin{eqnarray*}
\Gp(A)&=&\{M\in \mod A |\mbox{there is an exact sequence }
0\rightarrow M\rightarrow T^0\xrightarrow{d^0}T^1\xrightarrow{d^1}\cdots, \\ &&\mbox{ with }T^i\in\proj A,\ker d^i\in\,^\bot A,\forall i\geq0\}.
\end{eqnarray*}
In particular, $\ker d^i\in \Gp(A)$ for each $i\geq0$.

(b) If $M$ is Gorenstein projective, then $\Ext^i_A(M,L)=0$, $\forall i>0$, for all $L$ of finite projective dimension or of finite injective dimension.

(c) If $P^\bullet$ is a totally acyclic complex, then all $\Im d^i$ are Gorenstein projective; and any
truncations
$$\cdots\rightarrow P^i\rightarrow\Im d^i\rightarrow0,\quad 0\rightarrow\Im d^i\rightarrow P^{i+1}\rightarrow\cdots$$
and
$$0\rightarrow\Im d^i\rightarrow P^{i+1}\rightarrow\cdots\rightarrow P^j\rightarrow \Im d^j\rightarrow0,i<j$$
are $\Hom_A(-,\proj A)$-exact.
\end{lemma}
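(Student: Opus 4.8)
The plan is to establish the three parts in sequence; each is a routine dimension-shifting argument resting only on the definition of a totally acyclic complex recalled above, together with the observation that every projective module is a direct summand of a free one, so that a module $X$ satisfies $\Ext^{\ge1}_A(X,Q)=0$ for every projective $Q$ as soon as $\Ext^{\ge1}_A(X,A)=0$. For (a) I would prove the two inclusions separately. If $M\in\Gp(A)$, choose a totally acyclic complex $P^\bullet$ of projectives with $M\cong\Ker d^0$ and pass to its right half $0\to M\to P^0\xrightarrow{d^0}P^1\xrightarrow{d^1}\cdots$; its terms are projective, $\Ker d^0\cong M$, and $\Ker d^i=\Im d^{i-1}$ for $i\ge1$. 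Since $\cdots\to P^{i-2}\to P^{i-1}\to\Ker d^i\to0$ is a projective resolution and $\Hom_A(P^\bullet,A)$ is acyclic, a dimension shift gives $\Ext^j_A(\Ker d^i,A)=0$ for all $j\ge1$, hence $\Ker d^i\in{}^\bot\proj A$. Conversely, given an exact sequence $0\to M\to T^0\xrightarrow{d^0}T^1\to\cdots$ with the $T^i$ projective and every $\Ker d^i\in{}^\bot\proj A$, note that $\Ker d^0\cong M$ already lies in ${}^\bot\proj A$, so $\Ext^{\ge1}_A(M,A)=0$; splice a projective resolution $\cdots\to Q_1\to Q_0\to M\to0$ onto the left to get an acyclic complex $P^\bullet$ of projectives with $\Ker d^0\cong M$. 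It remains to show $\Hom_A(P^\bullet,A)$ is acyclic, and for this I would cut $P^\bullet$ into the short exact sequences $0\to\Ker d^n\to P^n\to\Ker d^{n+1}\to0$, each of which stays exact under $\Hom_A(-,A)$ because the relevant $\Ext^1_A(\Ker d^{n+1},A)$ vanishes --- for $n\ge0$ by hypothesis, for $n<0$ by dimension-shifting $\Ext^{\ge1}_A(M,A)=0$ --- so that concatenating them yields exactness of $\Hom_A(P^\bullet,A)$ everywhere; hence $P^\bullet$ is totally acyclic and $M\in\Gp(A)$.

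For (b) I would induct on $n=\pd_A L$. If $n=0$ then $L$ is projective and part (a) gives $M\in{}^\bot\proj A$, so $\Ext^{\ge1}_A(M,L)=0$. If $n\ge1$, pick a short exact sequence $0\to L'\to P\to L\to0$ with $P$ projective and $\pd_A L'=n-1$; the long exact sequence in $\Ext_A(M,-)$ squeezes $\Ext^i_A(M,L)$ between $\Ext^i_A(M,P)=0$ and $\Ext^{i+1}_A(M,L')=0$ (the latter by the inductive hypothesis), so $\Ext^i_A(M,L)=0$ for all $i>0$. For (c), acyclicity of $P^\bullet$ gives $\Im d^i=\Ker d^{i+1}$, and a shift of a totally acyclic complex is again totally acyclic (shifting preserves both acyclicity and acyclicity after applying $\Hom_A(-,A)$), so $\Im d^i$ is the degree-zero kernel of a totally acyclic complex and is therefore Gorenstein projective. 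For the truncations, each short exact sequence $0\to\Im d^k\to P^{k+1}\to\Im d^{k+1}\to0$ stays exact after applying $\Hom_A(-,Q)$ for every projective $Q$, since $\Ext^1_A(\Im d^{k+1},Q)=0$ by part (b); splicing these over the appropriate range of indices yields the asserted $\Hom_A(-,\proj A)$-exactness of $\cdots\to P^i\to\Im d^i\to0$, of $0\to\Im d^i\to P^{i+1}\to\cdots$, and of $0\to\Im d^i\to P^{i+1}\to\cdots\to P^j\to\Im d^j\to0$ for $i<j$.

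The whole argument is routine; the one spot needing a moment's care is the junction in the converse half of (a) --- checking $\Hom_A(-,A)$-acyclicity exactly where the spliced projective resolution of $M$ meets the given coresolution --- but this too collapses to the single vanishing $\Ext^1_A(M,A)=0$. The three statements are well known (cf. \cite{Be} and \cite{Zhang2}); a proof is recorded here only for completeness.
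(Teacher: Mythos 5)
Your proof is correct, and all three parts are handled by the standard dimension-shifting arguments one would expect. Note that the paper itself offers no proof of this lemma --- it is stated as a quotation of results from \cite{Be} and \cite{Zhang2} --- so there is no argument in the text to compare against; your write-up (the two inclusions in (a) with the splice at the junction controlled by $\Ext^1_A(M,A)=0$, the induction on $\pd_A L$ in (b), and the shift-plus-splicing argument in (c)) is exactly the standard proof recorded in those references.
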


For a module $X$, take a short exact sequence $$0\rightarrow \Omega X\rightarrow P\rightarrow X\rightarrow0$$
with $P$ projective. The module $\Omega X$ is called a \emph{syzygy module} of $X$. Obviously, if $X$ is Gorenstein projective, then so is $\Omega X$.

Let $\ct$ be the smallest subcategory of $\mod A$ containing all modules of finite projective dimension or of finite injective dimension and closed under direct summands and extensions.
From Lemma \ref{lemma property of Gorenstein projective modules} (b), it is easy to see that $\ct\subseteq (\Gproj A)^\bot$.

\begin{definition}[\cite{Ha1}, see also \cite{AR1,AR2}]
An Artin algebra $A$ is called a Gorenstein algebra (or Iwanaga-Gorenstein algebra) if $A$ satisfies $\ind A_A<\infty$ and $\ind_AA<\infty$.
Given an $A$-module $X$. If $\Ext^i_A(X,A)=0$ for all $i>0$, then $X$ is called a (maximal) Cohen-Macaulay module of $A$.
\end{definition}

Observe that for a Gorenstein algebra $A$, we have $\ind _AA=\ind A_A$, see \cite[Lemma 6.9]{Ha1}; the common value is denoted by $\Gd A$. If $\Gd A\leq d$, we say that $A$ is \emph{$d$-Gorenstein}.

\begin{theorem}[\cite{Bu,EJ}]
Let $A$ be a Gorenstein algebra. Then

(a) If $P^\bullet$ is an exact sequence of projective left $A$-modules, then $\Hom_A(P^\bullet,A)$ is again an exact sequence of projective right $A$-modules.

(b) A module $G$ is Gorenstein projective if and only if there is an exact sequence $0\rightarrow G\rightarrow P^0\rightarrow P^1\rightarrow \cdots$ with each
$P^i$ projective.

(c) $\Gproj A=\,^\bot A$.
\end{theorem}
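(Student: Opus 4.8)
The plan is to prove (1) first, to deduce (2) from it by a splicing argument, and finally to obtain (3) from (1) and (2) together with the double dual $M\mapsto M^{**}$. Throughout I would set $d=\Gd A$, so that $\id{}_AA=\id A_A=d<\infty$ (the two numbers agreeing by \cite[Lemma 6.9]{Ha1}); note that $A^{\mathrm{op}}$ is then $d$-Gorenstein as well, so (1) and (2), once proved, may be applied over $A^{\mathrm{op}}$ to right $A$-modules.

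For (1), given an exact complex $P^\bullet$ of finitely generated projective left $A$-modules I would set $Z^i=\Ker(P^i\to P^{i+1})=\Im(P^{i-1}\to P^i)$, breaking $P^\bullet$ into short exact sequences $0\to Z^i\to P^i\to Z^{i+1}\to 0$. A routine dimension shift identifies the cohomology of $\Hom_A(P^\bullet,A)$ at each spot with a group of the form $\Ext^1_A(Z^j,A)$, so it suffices to show $\Ext^1_A(Z^j,A)=0$ for every cocycle module $Z^j$. From the short exact sequences above and $\Ext^{\geq1}_A(P^i,A)=0$ one gets natural isomorphisms $\Ext^i_A(Z^j,A)\cong\Ext^{i+1}_A(Z^{j+1},A)$ for all $i\geq1$; iterating $d$ times gives $\Ext^1_A(Z^j,A)\cong\Ext^{1+d}_A(Z^{j+d},A)=0$, the last equality because $1+d>d=\id A_A$. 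Since each $\Hom_A(P^i,A)$ is a projective right $A$-module, this shows $\Hom_A(P^\bullet,A)$ is an exact complex of projective right $A$-modules.

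For (2), the forward implication is immediate from the definition: a totally acyclic complex $P^\bullet$ with $G\cong\Ker d^0$ restricts to an exact sequence $0\to G\to P^0\to P^1\to\cdots$ with each $P^i$ projective. For the converse, given such a sequence I would choose a projective resolution $\cdots\to Q_1\to Q_0\to G\to0$ and splice it onto the given sequence along the inclusion $G\hookrightarrow P^0$, producing an exact complex $\widetilde P^\bullet$ of projective left $A$-modules having $G$ among its cocycles; by (1) the complex $\Hom_A(\widetilde P^\bullet,A)$ is again exact, so $\widetilde P^\bullet$ is totally acyclic and $G$ is Gorenstein projective. For (3), the inclusion $\Gproj A\subseteq{}^\bot A$ is part (b) of the Lemma above applied with $L=A$. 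For the reverse inclusion, given $M$ with $\Ext^i_A(M,A)=0$ for all $i\geq1$, I would note that the same vanishing passes to every syzygy, since $\Ext^i_A(\Omega^jM,A)\cong\Ext^{i+j}_A(M,A)$ for $i\geq1$; hence dualizing a projective resolution $\cdots\to Q_1\to Q_0\to M\to0$ via $\Hom_A(-,A)$ yields an \emph{exact} coresolution $0\to M^*\to Q_0^*\to Q_1^*\to\cdots$ of the right module $M^*:=\Hom_A(M,A)$ by projective right modules. Applying (2) over $A^{\mathrm{op}}$ makes $M^*$ a Gorenstein projective right $A$-module; dualizing this coresolution back and comparing with the resolution of $M$ identifies the canonical map $M\to M^{**}$ as an isomorphism, and since $\Hom_A(-,A)$ carries a totally acyclic complex of projective right modules to a totally acyclic complex of projective left modules (again using part (c) of the Lemma), $M\cong(M^*)^*$ is Gorenstein projective.

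The main obstacle I anticipate is the reverse inclusion in (3): one must be sure that dualizing the resolution of $M$ really produces a genuinely exact coresolution of $M^*$ — this is exactly the point where the vanishing of $\Ext^{\geq1}_A(\Omega^jM,A)$ for \emph{all} $j$ is needed — and that $M$ is reflexive. Both of these rest on (1) and (2) rather than on (3) itself, so the argument does not circle back on itself; the Gorenstein hypothesis is used only through the finiteness of $\id{}_AA$ and $\id A_A$ exploited in (1) over $A$ and over $A^{\mathrm{op}}$.
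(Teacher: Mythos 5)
The paper offers no proof of this theorem; it is quoted verbatim as a known result with a citation to [EJ], so there is no internal argument to compare yours against. Your proof is correct and is essentially the standard one from the literature (Enochs--Jenda, Avramov--Martsinkovsky, Beligiannis): dimension-shifting against $\id{}_AA=\id A_A=d<\infty$ to get (1), splicing a projective resolution onto the given coresolution to get (2), and for (3) dualizing a projective resolution of $M\in{}^\bot A$, invoking (2) over $A^{\mathrm{op}}$ to see $M^*$ is Gorenstein projective on the right, and recovering $M\cong M^{**}$ by the five lemma. The only cosmetic point: your appeal to part (c) of the Lemma for the claim that $\Hom_A(-,A)$ sends a totally acyclic complex to a totally acyclic complex is not quite the right reference --- that claim follows directly from the definition of total acyclicity together with the natural isomorphism $P\cong P^{**}$ for finitely generated projectives, which is also what underlies your five-lemma step.
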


So for a Gorenstein algebra, the definition of Cohen-Macaulay module coincides with the one of Gorenstein projective.

Recall that for an algebra $A$, its \emph{singularity category} is the quotient category $D_{sg}(A):=D^b(A)/K^b(\proj A)$, which is defined by Buchweitz \cite{Bu}, see also \cite{Ha1,Or1}.

\begin{theorem}[Buchweitz's Theorem, see also \cite{KV} for a more general version]\label{theorem stable category of CM modules }
Let $A$ be an Artin algebra. Then $\Gproj (A)$ is a Frobenius category with the projective modules as the projective-injective objects, and there is an exact embedding $\Phi:\underline{\Gp}A\rightarrow D_{sg}(A)$ given by $\Phi(M)=M$, where the second $M$ is the corresponding stalk complex at degree $0$, and $\Phi$ is an equivalence if and only if $A$ is Gorenstein.
\end{theorem}

Let $A$ be an Artin algebra. Inspired by Buchweitz's Theorem, the \emph{Gorenstein defect category} is defined to be Verdier quotient $D_{def}(A):=D_{sg}(A)/\Im(\Phi)$, see \cite{BJO}.
From \cite{KZ}, we know that $D_{def}(A)$ is triangle equivalent to $D^b(A)/\langle \Gp(A)\rangle$, where $\langle \Gp(A)\rangle$ denotes the triangulated subcategory of $D^b(A)$ generated by $\Gp(A)$, i.e., the smallest triangulated subcategory of $D^b(A)$ containing $\Gp(A)$.

\begin{lemma}[\cite{BJO,KZ}]
Let $A$ be an Artin algebra. Then the following are equivalent.

(a) $A$ is Gorenstein;

(b) $\underline{\Gp}(A)$ is triangle equivalent to $D_{sg}(A)$;

(c) $D_{def}(A)=0$;

(d) $D^b(A)=\langle \Gp(A)\rangle$.
\end{lemma}

\section{Gorenstein defect categories}

Let $A$ and $B$ be two Artin algebras, $_AM_B$ an $A\mbox{-}B$-bimodule, and $\Lambda=\left( \begin{array}{cc} A&M\\0&B \end{array}\right)$.
Let $G=M\otimes_B -$ and $F$ be its right adjoint functor $\Hom_A(M,-)$. We denote by $\alpha_{X,Y}$ the adjoint isomorphism
$$\alpha_{X,Y}: \Hom_A(G(Y),X)\rightarrow \Hom_B(Y,F(X)),$$
and
$\psi_X=\alpha^{-1}_{X,F(X)}(\Id_{F(X)})$ for any $X\in\mod A$.

A left $\Lambda$-module is identified with a triple $\left( \begin{array}{cc} X\\Y \end{array}\right)_\phi$, where $X\in \mod A$, $Y\in \mod B$, and $\phi:M\otimes_BY\rightarrow X$ is a morphism of $A$-modules. A morphism $\left(\begin{array}{cc} X\\ Y \end{array}\right)_\phi\rightarrow \left(\begin{array}{cc} X' \\ Y' \end{array}\right)_{\phi'}$
is a pair $\left(\begin{array}{cc} f\\ g \end{array}\right)$, where $f\in \Hom_A(X,X')$, $g\in\Hom_B(Y,Y')$, such that $\phi'(\id_M\otimes g)=f\phi$.
So $\ker(\left(\begin{array}{cc} f\\ g \end{array}\right) )= \left(\begin{array}{cc} \ker (f)\\ \ker(g) \end{array}\right)_\psi$
for some induced (unique) morphism $\psi: G(\ker(g))\rightarrow \ker (f)$. In particular, $\left(\begin{array}{cc} f\\ g \end{array}\right)$ is injective if and only if $f$ and $g$ are injective. Similarly, we get that
$\Im(\left(\begin{array}{cc} f\\ g \end{array}\right) )= \left(\begin{array}{cc} \Im(f)\\ \Im(g) \end{array}\right)_{\psi'}$
for some induced (unique) morphism $\psi': G(\Im(g))\rightarrow \Im (f)$ since $G$ is right exact, and
$\left(\begin{array}{cc} f\\ g \end{array}\right)$ is surjective if and only if $f$ and $g$ are surjective.

Note that $\left(\begin{array}{cc} P \\ 0 \end{array}\right)$ and $\left(\begin{array}{cc} G(Q) \\ Q \end{array}\right)_{\id}$ are precisely the indecomposable projective modules, where $P$ and $Q$ are indecomposable projective as $A$-module and $B$-module respectively.
Dually, $\left(\begin{array}{cc} J \\ F(J) \end{array}\right)_{\psi_J}$ and $\left(\begin{array}{cc} 0 \\ I \end{array}\right)_{\id}$ are precisely the indecomposable injective modules, where $J$ and $I$ are indecomposable injective as $A$-module and $B$-module respectively.
We refer the reader to \cite{P,Zhang2} for the statements here.

\begin{theorem}[\cite{P,Zhang2}]\label{recollement of abelian categories}
Let $A$ and $B$ be Artin algebras, $_AM_B$ an $A\mbox{-}B$-bimodule, and $\Lambda=\left( \begin{array}{cc} A&M\\0&B \end{array}\right)$. Denote by $G=M\otimes_B-$. We have the following recollement of abelian categories:
\[\xymatrix{
\mod A \ar[r]^{i_*}& \mod \Lambda
\ar@<2ex>[l]^{i^!}\ar@<-3ex>[l]_{i^*}\ar[r]^{j^*}& \mod B
\ar@<2ex>[l]^{j_*}\ar@<-3ex>[l]_{j_!} , }\]
where $i^*$ is given by $\left(\begin{array}{cc} X  \\ Y   \end{array} \right)_{\phi }\mapsto \Coker(\phi )$;
$i_*$ is given by $X \mapsto \left(\begin{array}{cc} X  \\ 0  \end{array} \right)$; $i^!$ is given by $\left(\begin{array}{cc} X  \\ Y   \end{array} \right)_{\phi }\mapsto X $;
$j_!$ is given by $Y  \mapsto\left(\begin{array}{cc} G(Y ) \\ Y   \end{array} \right)_{\Id}$;
$j^*$ is given by $\left(\begin{array}{cc} X  \\ Y   \end{array} \right)_{\phi }\mapsto Y $;
$j_*$ is given by $ Y \mapsto \left(\begin{array}{cc} 0 \\ Y   \end{array} \right)$.
\end{theorem}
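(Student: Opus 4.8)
The plan is to verify directly, using the triple description of $\mod\Lambda$ recalled just above, that the six displayed functors constitute a recollement of abelian categories, i.e.\ that conditions (R1), (R2) and (R5) are satisfied. All six functors are manifestly additive, and their action on a morphism $\binom{f}{g}$ is the evident one (components $f$, $g$, their tensor products, inclusions or projections); checking functoriality is a routine verification against the compatibility condition $\phi'(\id_M\otimes g)=f\phi$. Conceptually this is nothing but the recollement attached to the idempotent $e=\binom{0}{0}\binom{0}{1}\in\Lambda$, for which $e\Lambda e\cong B$ and $\Lambda/\Lambda e\Lambda\cong A$, so one could instead quote the general theory of recollements associated to idempotents; I will give the hands-on argument since the explicit formulas for the functors are exactly what is used in the rest of the paper.

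First I would establish (R1). Fix $N=\binom{X'}{Y'}_{\phi'}$ and unwind morphisms. A $\Lambda$-morphism $N\to i_*X=\binom{X}{0}$ is a pair $\binom{f}{0}$ with the constraint $f\phi'=0$, hence is the same datum as an $A$-morphism $\Coker\phi'=i^*N\to X$; this gives $(i^*,i_*)$. A $\Lambda$-morphism $i_*X\to N$ is a pair $\binom{f}{0}$ with no further constraint, hence is the same as $f\in\Hom_A(X,X')=\Hom_A(X,i^!N)$; this gives $(i_*,i^!)$. A $\Lambda$-morphism $j_!Y=\binom{G(Y)}{Y}_{\id}\to N$ is a pair $\binom{f}{g}$ with $f=\phi'(\id_M\otimes g)$, hence is determined by, and determines, $g\in\Hom_B(Y,Y')=\Hom_B(Y,j^*N)$; this gives $(j_!,j^*)$. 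A $\Lambda$-morphism $N\to j_*Y=\binom{0}{Y}$ is a pair $\binom{0}{g}$ with no constraint, hence the same as $g\in\Hom_B(Y',Y)=\Hom_B(j^*N,Y)$; this gives $(j^*,j_*)$. In each case naturality in both variables is immediate from the formulas, so (R1) holds.

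Next I would check (R2) and (R5). For (R2): $i_*$ is fully faithful because a $\Lambda$-morphism $\binom{X}{0}\to\binom{X'}{0}$ is precisely a pair $\binom{f}{0}$ with $f\in\Hom_A(X,X')$ arbitrary; and $j_!$, $j_*$ are fully faithful because $j^*j_!=\id_{\mod B}$ and $j^*j_*=\id_{\mod B}$ with the unit $Y\to j^*j_!Y$ and the counit $j^*j_*Y\to Y$ of the adjunctions just obtained both equal to $\id_Y$, so the standard criterion (an adjoint functor with invertible unit, resp.\ counit, is fully faithful) applies. Each of $i_*$, $j_!$, $j_*$ sends a nonzero module to a nonzero module, hence is an embedding. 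For (R5): if $j^*N=Y'=0$ then the structure map $\phi'\colon M\otimes_B0\to X'$ is forced to be zero, so $N=\binom{X'}{0}=i_*X'$ lies in $\Im i_*$; conversely $j^*i_*=0$. Therefore $\ker j^*=\Im i_*$, and the three axioms together give the asserted recollement.

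As for the main obstacle: there is essentially no deep difficulty, the whole proof being bookkeeping with the triple description of $\mod\Lambda$. The one point requiring a little care is the adjunction $(j_!,j^*)$, where the freeness of the identity structure map $\id\colon G(Y)\to G(Y)$ is precisely what collapses a $\Lambda$-morphism out of $j_!Y$ to a bare $B$-morphism out of $Y$; all the other constraints simply force one component of $\binom{f}{g}$ to vanish. One should also confirm that $i^*$, being a cokernel, is a bona fide functor and genuinely left adjoint to $i_*$, but both facts are immediate from the universal property of cokernels. With these checks in place the statement follows.
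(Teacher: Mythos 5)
The paper states this theorem as a cited result from \cite{Zhang2} and gives no proof of its own, so there is nothing to compare against line by line; your direct verification of (R1), (R2) and (R5) via the triple description of $\mod\Lambda$ is correct and is the standard argument (equivalently, the recollement attached to the idempotent $e$ with $e\Lambda e\cong B$). All four adjunction computations, the full faithfulness of $i_*$, $j_!$, $j_*$, and the identification $\ker j^*=\Im i_*$ check out.
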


Note that the functors $i_*,i^!,j^*,j_*$ defined above are exact and the functors $i^*,i_*,j_!,j^*$ preserve projective modules.

\begin{lemma}\label{lemma adjoint pair}
Keep the notations as above. Then

(a) $i^!$ admits a right adjoint functor $i_?:\mod A\rightarrow \mod \Lambda$ given by $X\mapsto \left( \begin{array}{cc} X\\ F(X) \end{array} \right)_{\psi_X}$, where $\psi_X=\alpha^{-1}_{X,F(X)}(\Id_{F(X)})$;

(b) $j_*$ admits a right adjoint functor $j_?: \mod \Lambda\rightarrow \mod B$
given by $\left( \begin{array}{cc} X\\ Y \end{array} \right)_{\phi}\mapsto \ker (\alpha_{X,Y}(\phi))$.
\end{lemma}
\begin{proof}
(a) Let $\left(\begin{array}{cc} X  \\ Y   \end{array} \right)_{\phi }\in\mod \Lambda$ and $X'\in\mod A$. For any morphism $f$ in $\Hom_{A}(i^!(\left(\begin{array}{cc} X  \\ Y   \end{array} \right)_{\phi }) ,X')=\Hom_A(X,X')$, by the naturality of the adjoint pair $(G,F)$, we get the following commutative diagram
\[\xymatrix{ G(Y) \ar[rrr]^{ G(F(f)\alpha_{X,Y} (\phi))} \ar[d]^{\phi}&&&  GF(X') \ar[d]^{\psi_{X'}} \\
X\ar[rrr]^{f} &&& X'. }\]
So $\left(\begin{array}{cc} f  \\   F(f)\alpha_{X,Y} (\phi)  \end{array} \right): \left(\begin{array}{cc} X  \\ Y   \end{array} \right)_{\phi }\rightarrow \left( \begin{array}{cc} X'\\ F(X') \end{array} \right)_{\psi_{X'}}$ is a $\Lambda$-morphism.
Define $$\beta_{X', \left(\begin{array}{cc} X  \\ Y   \end{array} \right)_{\phi }}: \Hom_{A}(i^!(\left(\begin{array}{cc} X  \\ Y   \end{array} \right)_{\phi }) ,X')\rightarrow \Hom_\Lambda( \left(\begin{array}{cc} X  \\ Y   \end{array} \right)_{\phi } , i_?(X') )$$
by mapping $f$ to $\left(\begin{array}{cc} f  \\   F(f)\alpha_{X,Y} (\phi)  \end{array} \right)$ for any $f\in \Hom_{A}(i^!(\left(\begin{array}{cc} X  \\ Y   \end{array} \right)_{\phi }) ,X')=\Hom_A(X,X')$. Obviously, it is well-defined, and injective.
Conversely, it is easy to see that any morphism in $\Hom_\Lambda( \left(\begin{array}{cc} X  \\ Y   \end{array} \right)_{\phi } , i_?(X') )$ is of form
$\left(\begin{array}{cc} f  \\   F(f)\alpha_{X,Y} (\phi)  \end{array} \right)$ for some (unique) morphism $f:X\rightarrow X'$. So
$\beta_{X', \left(\begin{array}{cc} X  \\ Y   \end{array} \right)_{\phi }}$ is surjective.
From the naturality of the adjoint pair $(G,F)$, it is routine to check that this isomoprhism $\beta_{X', \left(\begin{array}{cc} X  \\ Y   \end{array} \right)_{\phi }}$ is natural, and then $(i^!,i_?)$ is an adjoint pair.

The proof of (b) is similar to that of (a), we omit it here.
\end{proof}

\begin{lemma}[see e.g. \cite{chen2}]\label{lemma functor to derived functor}
Let $F_1:\ca\rightarrow\cb$ be an exact functor between abelian categories which has an exact right adjoint $F_2$. Then the pair $(D^b(F_1),D^b(F_2))$ is adjoint, where $D^b(F_1)$ is the induced functor from $D^b(\ca)$ to $D^b(\cb)$ ($D^b(F_2)$ is defined similarly). Moreover, if $F_1$ is fully faithful, then so is $D^b(F_1)$.
\end{lemma}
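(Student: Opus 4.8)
\emph{Proof proposal.} The plan is to lift the adjunction $(F_1,F_2)$ on the abelian level directly to the bounded derived categories by means of its unit and counit. First I would observe that, since $F_1$ and $F_2$ are exact, they preserve short exact sequences and hence commute with cohomology; in particular they carry acyclic complexes to acyclic complexes and quasi-isomorphisms to quasi-isomorphisms. Applying $F_1$ (resp. $F_2$) termwise to a bounded complex therefore induces a functor $K^b(\ca)\to K^b(\cb)$ (resp. $K^b(\cb)\to K^b(\ca)$) which descends to $D^b(F_1)\colon D^b(\ca)\to D^b(\cb)$ (resp. $D^b(F_2)$). Moreover $D^b(F_2F_1)=D^b(F_2)\circ D^b(F_1)$ and $D^b(F_1F_2)=D^b(F_1)\circ D^b(F_2)$, since all of these functors are computed termwise.

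Next I would take the unit $\eta\colon \id_{\ca}\to F_2F_1$ and the counit $\varepsilon\colon F_1F_2\to \id_{\cb}$ of the given adjunction. Applied termwise, $\eta$ and $\varepsilon$ are morphisms of complexes, natural in the complex, so they induce natural transformations $\widetilde{\eta}\colon \id_{D^b(\ca)}\to D^b(F_2)D^b(F_1)$ and $\widetilde{\varepsilon}\colon D^b(F_1)D^b(F_2)\to \id_{D^b(\cb)}$ after localization. Since the triangle identities
$$(\varepsilon F_1)(F_1\eta)=\id_{F_1},\qquad (F_2\varepsilon)(\eta F_2)=\id_{F_2}$$
hold at the level of the abelian categories, they hold termwise on complexes and therefore remain valid for $\widetilde{\eta}$ and $\widetilde{\varepsilon}$ in the derived categories. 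Hence $(D^b(F_1),D^b(F_2))$ is an adjoint pair.

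For the fully faithful statement I would invoke the standard criterion that a functor admitting a right adjoint is fully faithful if and only if its unit is a natural isomorphism. If $F_1$ is fully faithful, then $\eta\colon \id_{\ca}\to F_2F_1$ is a natural isomorphism of functors on $\ca$; applied termwise it becomes an isomorphism of complexes, so $\widetilde{\eta}\colon \id_{D^b(\ca)}\to D^b(F_2)D^b(F_1)$ is a natural isomorphism. Applying the same criterion to the pair $(D^b(F_1),D^b(F_2))$ shows that $D^b(F_1)$ is fully faithful; concretely, for $X,Y\in D^b(\ca)$,
$$\Hom_{D^b(\cb)}(D^b(F_1)X,D^b(F_1)Y)\cong \Hom_{D^b(\ca)}(X,D^b(F_2)D^b(F_1)Y)\cong \Hom_{D^b(\ca)}(X,Y),$$
the first isomorphism being the adjunction and the second being induced by $\widetilde{\eta}^{-1}$, and one checks the composite is the map induced by $D^b(F_1)$.

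The only point requiring genuine care — and the step I expect to be the main, if modest, obstacle — is the bookkeeping showing that the termwise constructions really descend to the Verdier quotients: that $D^b(F_i)$ is well defined on morphisms of $D^b$ (this is exactly where exactness, via preservation of quasi-isomorphisms, is needed) and that the unit and counit, being compatible with the localization functors, induce honest natural transformations on $D^b$ rather than merely on $K^b$. Once this is in place, everything else is a formal consequence of the abelian-level adjunction.
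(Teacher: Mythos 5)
Your proof is correct. In the paper this lemma is only cited (to Chen's \emph{A recollement of vector bundles}) and not proved, so there is no in-text argument to compare against; but the argument you give is precisely the standard one: exactness guarantees that $F_1,F_2$ preserve quasi-isomorphisms, so termwise application descends to $D^b$; the unit and counit applied termwise are chain maps natural in the complex, hence descend through the Verdier localization; the triangle identities are inherited, giving the adjunction; and the characterization of fully faithful left adjoints via invertibility of the unit transfers the fully-faithfulness of $F_1$ to $D^b(F_1)$. The one point you flag as needing care (descent of the unit/counit to the localization) is routine once the functors are known to preserve quasi-isomorphisms, since a natural transformation between functors that both invert the localizing class automatically yields a natural transformation on the localized categories.
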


The following well-known result is very helpful.

\begin{theorem}[see e.g. \cite{CL}]\label{recollement of derived categories}
Let $A$ and $B$ be two Artin algebras, $_AM_B$ an $A\mbox{-}B$-bimodule and $\Lambda=\left( \begin{array}{cc} A&M\\0&B \end{array}\right)$.
If $M$ as a right module has finite projective dimension, then we have the following recollement:
\[\xymatrix{
D^b(A) \ar[r]^{D^b(i_*)}& D^b(\Lambda)
\ar@<2ex>[l]^{D^b(i^!)}\ar@<-3ex>[l]_{\L(i^*)}\ar[r]^{D^b(j^*)}& D^b(B)
\ar@<2ex>[l]^{D^b(j_*)}\ar@<-3ex>[l]_{\L(j_!)}  }\]
where $\L(i^*), D^b(i_*),D^b(i^!),\L(j_!),D^b(j^*),D^b(j_*)$ are the derived functors of these in Theorem \ref{recollement of abelian categories}.
\end{theorem}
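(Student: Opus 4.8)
The six candidate functors are $\L(i^*)$ together with $D^b(i_*),D^b(i^!),D^b(j_!),D^b(j^*),D^b(j_*)$, and the plan is to verify the recollement axioms (R1)--(R4) for them on $D^b(A)\to D^b(\Lambda)\to D^b(B)$. The first observation is that, among the abelian functors of Theorem \ref{recollement of abelian categories}, all are exact except $i^*$: indeed $i_*,i^!,j^*,j_*$ are exact for trivial reasons, and $j_!=\left(\begin{smallmatrix} G(-) \\ - \end{smallmatrix}\right)$ is exact because $M_B$ is projective, hence flat, so $G=M\otimes_B-$ is exact. Thus $D^b(i_*),D^b(i^!),D^b(j_!),D^b(j^*),D^b(j_*)$ are just the termwise triangulated functors. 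The functor $i^*=\Coker(\phi)$ is only right exact; I would identify it as $i^*=(\Lambda/\Lambda e\Lambda)\otimes_\Lambda-$ for the idempotent $e=\left(\begin{smallmatrix} 0&0 \\ 0&1 \end{smallmatrix}\right)$, where $\Lambda/\Lambda e\Lambda\cong A$, and observe that the right $\Lambda$-module $\Lambda e\Lambda=\left(\begin{smallmatrix} 0&M \\ 0&B \end{smallmatrix}\right)$ is pulled back along $\Lambda\to B$ from the right $B$-module $M\oplus B$, hence is projective precisely because $M_B$ is projective; so $\pd(\Lambda/\Lambda e\Lambda)\le1$ as a right $\Lambda$-module and $\L(i^*)=(\Lambda/\Lambda e\Lambda)\otimes^{\L}_\Lambda-$ is a well-defined triangulated functor $D^b(\Lambda)\to D^b(A)$ landing in $D^b$.

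For (R1) I would appeal to Lemma \ref{lemma functor to derived functor}: the abelian adjoint pairs $(i_*,i^!)$, $(j_!,j^*)$ and $(j^*,j_*)$ consist of exact functors in which one is an exact right adjoint of the other, so the Lemma gives the adjoint pairs $(D^b(i_*),D^b(i^!))$, $(D^b(j_!),D^b(j^*))$ and $(D^b(j^*),D^b(j_*))$. For the pair $(i^*,i_*)$: since $i^*$ is right exact with exact right adjoint $i_*$, its left derived functor is left adjoint to $D^b(i_*)$ on $D^-$, and by the boundedness established above this restricts to an adjoint pair $(\L(i^*),D^b(i_*))$ on $D^b$. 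For (R2): $i_*$ and $j_!$ are fully faithful, exact, and possess exact right adjoints, so $D^b(i_*)$ and $D^b(j_!)$ are fully faithful by the Lemma; for $D^b(j_*)$ I would instead note that $D^b(j^*)=e\Lambda\otimes_\Lambda-$ is a Verdier localization — it is essentially surjective since $j^*j_*=\id$, and its kernel is the thick subcategory of complexes with acyclic $B$-component, which by the stratifying-ideal property of $\Lambda e\Lambda$ (available because $\Lambda e$ is projective over $e\Lambda e=B$) is the essential image of $D^b(i_*)$ — and a fully faithful right (resp. left) adjoint of a localization is automatic, so $D^b(j_*)$ (and again $D^b(j_!)$) is a full embedding.

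Axiom (R3), $j^*i_*=0$, is immediate from $j^*\left(\begin{smallmatrix} X \\ 0 \end{smallmatrix}\right)=0$. For (R4) I would start from the canonical unit/counit exact sequences of the abelian recollement: the short exact sequence $0\to i_*i^!W\to W\to j_*j^*W\to0$, and the four-term exact sequence $0\to i_*(\Ker\phi)\to j_!j^*W\to W\to i_*i^*W\to0$ for $W=\left(\begin{smallmatrix} X \\ Y \end{smallmatrix}\right)_\phi$. Passing to $D^b(\Lambda)$, the first yields the triangle $i_*i^!W\to W\to j_*j^*W\to\cdot$ at once; for the second, one computes $L_1i^*W\cong\Ker\phi$ and $L_{\ge2}i^*=0$ (using again that $\Lambda e\Lambda$ is $\Lambda$-projective on the right), so that $\cone\bigl(D^b(j_!)D^b(j^*)W\to W\bigr)\cong D^b(i_*)\L(i^*)W$, which produces the triangle $j_!j^*W\to W\to i_*i^*W\to\cdot$.

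The steps that are not purely formal are the boundedness of $\L(i^*)$ and the localization statement used in (R2)--(R4); both rest on the single input that $\Lambda e\Lambda$ is projective as a right $\Lambda$-module, equivalently that $M_B$ is projective, so this is the place where that hypothesis does the work. (The hypothesis that ${}_AM$ is projective is not strictly needed for this statement; it is part of the standing assumptions because it makes $G$ preserve projectives, which is what lets one later descend the recollement to perfect complexes and to singularity categories.) Everything else is a formal consequence of Theorem \ref{recollement of abelian categories} and Lemma \ref{lemma functor to derived functor}. As an alternative organisation, one can quote the general fact that a stratifying idempotent ideal $\Lambda e\Lambda$ induces a recollement of $D^b(\Lambda)$ relative to $D^b(\Lambda/\Lambda e\Lambda)$ and $D^b(e\Lambda e)$, and then merely match the six structural functors of that recollement with the six derived functors listed above; the verification that $\Lambda e\Lambda$ is stratifying is once more exactly the computation that $\Lambda e$ is projective over $e\Lambda e=B$.
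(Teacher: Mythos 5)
The paper states this theorem with a citation to \cite{CPS2} and provides no proof, so there is no internal argument to compare against; your task is therefore to supply the missing verification, and what you wrote is correct and is the standard Cline--Parshall--Scott route specialized to triangular matrix rings. The pivotal point is indeed the single computation that $\Lambda e\Lambda=\Lambda e\cong(0,M\oplus B)$ is projective as a right $\Lambda$-module precisely because $M_B$ is projective: this makes $\Lambda e\Lambda$ a stratifying ideal (the extra condition $\Lambda e\otimes^{\mathbf L}_B e\Lambda\xrightarrow{\sim}\Lambda e\Lambda$ is automatic here since $e\Lambda\cong B$ as a $B$-bimodule), forces $\pd A_\Lambda\le1$ so that $\L i^*$ preserves $D^b$, and together with flatness of $M_B$ keeps the other five functors exact. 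The adjunctions via Lemma \ref{lemma functor to derived functor}, the computation $L_1i^*W\cong\Ker\phi$ with $L_{\ge2}i^*=0$, and the two canonical triangles are all as you say. One small streamlining: to get full faithfulness of $D^b(j_*)$ you do not need the stratifying/Verdier-localization detour at all; $j^*$ and $j_*$ are both exact with $j^*j_*\cong\mathrm{id}$, so $D^b(j^*)D^b(j_*)\cong\mathrm{id}$ termwise and $D^b(j_*)$ is a full embedding directly (dually for $D^b(j_!)$ using $j^*j_!\cong\mathrm{id}$, if one prefers to avoid Lemma \ref{lemma functor to derived functor} there). Your parenthetical remark that projectivity of ${}_AM$ is not needed for this particular theorem agrees with the paper's own statement in the introduction, which invokes \cite{CPS2} under the weaker hypothesis $\pd M_B<\infty$; note, though, that with the six underived functors as displayed in the theorem one needs $M_B$ at least flat so that $j_!$ remains exact, whereas under only $\pd M_B<\infty$ the left-hand functor would have to be $\L j_!$.
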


Note that the recollement in Theorem \ref{recollement of derived categories} is exactly the derived version of the recollement in Theorem \ref{recollement of abelian categories}. For convenience, we also use $i^*,i_*,i^!,j_!,j^*,j_*$ to denote their derived functors respectively if there is no confusion.

\begin{lemma}[\cite{Zhang2}]\label{lemma zhang}
Let $A$ and $B$ be two Artin algebras, $_AM_B$ an $A\mbox{-}B$-bimodule with $\pd M_B<\infty$ and $\Lambda=\left( \begin{array}{cc} A&M\\0&B \end{array}\right)$. If $\pd_AM<\infty$ or $\ind_A M<\infty$, then the $\Lambda$-module $\left(\begin{array}{cc} X\\ Y   \end{array}  \right)_{\phi}$ is Gorenstein projective if and only if $Y$ is Gorenstein projective, $\phi$ is injective, and $\Coker(\phi)$ is Gorenstein projective.
\end{lemma}

\begin{proposition}\label{proposition left recollement}
Let $A$ and $B$ be two Artin algebras, $_AM_B$ an $A\mbox{-}B$-bimodule with $\pd M_B<\infty$ and $\Lambda=\left( \begin{array}{cc} A&M\\0&B \end{array}\right)$.
If $\pd _AM<\infty$ or $\ind _A M<\infty$, then we have the following left recollement:
\[\xymatrix{
D_{def}(A) \ar[r]^{\tilde{i}_*}& D_{def}(\Lambda)
\ar@<-3ex>[l]_{\tilde{i}^*}\ar[r]^{\tilde{j}^*}& D_{def}(B)
\ar@<-3ex>[l]_{\tilde{j}_!}  }\]
where $\tilde{i}^*$, $\tilde{i}_*$, $\tilde{j}_!$ and $\tilde{j}^*$ are induced by the six structure functors in Theorem \ref{recollement of derived categories}.
\end{proposition}
\begin{proof}
We apply Lemma \ref{theorem left recollement} to prove it.

First, in combination with the definition of the functors in Theorem \ref{recollement of abelian categories}, from Lemma \ref{lemma zhang}, we get that $i_*(\Gproj (A))\subseteq \Gproj(\Lambda)$, $i^*(\Gproj(\Lambda))\subseteq \Gproj(A)$, $j^*(\Gproj(\Lambda))\subseteq \Gproj(B)$ and $j_!(\Gproj(B))\subseteq \Gproj(\Lambda)$. In particular, $i_*i^* (\Gproj(\Lambda))\subseteq \Gproj(\Lambda)$.  Since $i^*i_*\simeq \Id$, we get that
$i^*(\Gproj(\Lambda))\simeq \Gproj(A)$; since $j^*j_!\simeq \Id$, we get that $j^*(\Gproj(\Lambda))\simeq \Gproj(B)$.

Since $i_*$ and $j^*$ are exact functors, it is easy to see that $D^b(i_*)(\langle \Gproj(A)\rangle)\subseteq \langle\Gproj(\Lambda)\rangle$ and $D^b(j^*)(\langle \Gproj(\Lambda))\simeq \langle \Gproj (B)\rangle$.

We claim that the restriction of $i^*$ to $\Gproj(\Lambda)$ and the restriction of $j_!$ to $\Gproj(B)$ are exact.
For any exact sequence $0\rightarrow Y_1\rightarrow Y_2\rightarrow Y_3\rightarrow0$ in $\Gp(B)$, since $\pd M_B<\infty$ we get that $\ind_B D(M)<\infty$ and then $D \mathrm{Tor}^B_1(M,Y_3)= \Ext^1_B(Y_3,D(M))\simeq0$, so $0\rightarrow G(Y_1)\rightarrow G(Y_2) \rightarrow G(Y_3)\rightarrow0$ is exact, which implies that the restriction of $G$ to $\Gp(B)$ is exact. From the definition of $j_!$, it is easy to see that the restriction of $j_!$ to $\Gproj(B)$ is exact, which implies that for any Gorenstein projective $B$-module $Y$, $\L(j_!)(Y)\cong j_!(Y)$ in $D^b(\Lambda)$. So $\L(j_!)(\Gproj(B))=j_!(\Gproj(B))\subseteq \Gproj(\Lambda)$.
Then $\L(j_!)(\langle \Gproj (B)\rangle)\subseteq \langle \Gproj (\Lambda)\rangle$ since $\L(j_!)(\Gproj(B))\subseteq \Gproj(\Lambda)$.

Lemma \ref{lemma zhang} shows that for any Gorenstein projective $\Lambda$-module $\left(\begin{array}{cc} X\\ Y   \end{array}  \right)_{\phi}$, $\phi:G(Y)\rightarrow X$ is injective, which implies that the restriction of $i^*$ to $\Gproj(\Lambda)$ is exact. Similar to the above, we get that $\L(i^*)(\langle \Gproj (\Lambda)\rangle)\simeq \langle \Gproj (A)\rangle$ since $i^* (\Gproj(\Lambda))\simeq \Gproj(A)$. Together with $D^b(i_*)(\langle \Gproj(A)\rangle)\subseteq \langle\Gproj(\Lambda)\rangle$, we get that
$D^b(i_*)\L(i^*)( \langle \Gproj (\Lambda)\rangle)\subseteq \langle \Gproj (\Lambda)\rangle$.

By the definition of Gorenstein defect category, Lemma \ref{theorem left recollement} yields that there is a left recollement of Gorenstein defect categories
\[\xymatrix{
D_{def}(A) \ar[r]^{\tilde{i}_*}& D_{def}(\Lambda)
\ar@<-3ex>[l]_{\tilde{i}^*}\ar[r]^{\tilde{j}^*}& D_{def}(B).
\ar@<-3ex>[l]_{\tilde{j}_!}  }\]
\end{proof}

\begin{example}
If neither $\pd _AM<\infty$ nor $\ind _A M<\infty$, then Proposition \ref{proposition left recollement} is not true in general.

Let $K$ be a field, $Q_\Lambda$ be the quiver $\xymatrix{1 \ar@/^/[r]^{\alpha} & 2 \ar@/^/[l]^\beta & 3 \ar[l]_\gamma   }$, and $\Lambda=KQ_\Lambda/I_\Lambda$, where $I_\Lambda$ is generated by
$\alpha\beta,\beta\alpha,\beta\gamma$. Let $e_i$ be the idempotent corresponding to the vertex $i$. Set $e=e_1+e_2$ and $A=e\Lambda e$. Then
$\Lambda= \left( \begin{array}{cc} A&M\\0&B \end{array}\right)$ with $B=K$, so $\pd M_B=0$.
It is easy to see that neither $\pd _AM<\infty$ nor $\ind _A M<\infty$.

From \cite{chen3}, we know that $\Lambda$ is CM-free, i.e. $\Gp(\Lambda)=\proj \Lambda$, then $D_{def}(\Lambda)=D_{sg}(\Lambda)$ which is not zero since $\Lambda$ is not Gorenstein. However, $A$ and $B$ are self-injective, so $D_{def}(A)=D_{def}(B)=0$. Therefore, there is not any left recollement of $D_{def}(\Lambda)$ relative to $D_{def}(A)$ and $D_{def}(B)$.
\end{example}

\begin{theorem}\label{theorem characterize of gorenstein property}
Let $A$, $B$ be two Artin algebras, $M$ an $A\mbox{-}B$-bimodule and $\Lambda=\left( \begin{array}{cc} A&M\\0&B \end{array}\right)$. If $\pd M_B<\infty$ and $\pd_A M<\infty$ or $\ind_A M<\infty$, then

(a) If $B$ is a Gorenstein algebra, then $D_{def}(\Lambda)\simeq D_{def}(A)$.

(b) If $A$ is a Gorenstein algebra, then $D_{def}(\Lambda)\simeq D_{def}(B)$.
\end{theorem}
\begin{proof}
Proposition \ref{proposition left recollement} implies that $\tilde{i}_*$ and $\tilde{j}_!$ are full embeddings, and for any $Z\in D_{def}(\Lambda)$, there is a triangle in $D_{def}(\Lambda)$:
$$\tilde{j}_! \tilde{j}^* Z \rightarrow Z\rightarrow \tilde{i}_*\tilde{i}^* Z  \rightarrow \tilde{j}_! \tilde{j}^* Z[1].$$

We only need to prove (a) since (b) is similar.

If $B$ is a Gorenstein algebra, then $D_{def}(B)=0$, and so $\tilde{j}^* Z=0$ for any $Z\in D_{def}(\Lambda)$. In combination with the above triangle, we get that $Z\cong \tilde{i}_*\tilde{i}^* Z$ in $D_{def}(\Lambda)$, and then $Z\in\Im \tilde{i}_*$, which means that $\tilde{i}_*$ is dense. Therefore, $\tilde{i}_*: D_{def}(A)\rightarrow D_{def}(\Lambda)$ is an equivalence.
\end{proof}

\begin{corollary}[\cite{Chen1,XZ}]\label{corollary triangular matrix algebra to be Gorenstein}
Let $A$, $B$ be two Artin algebras, $M$ an $A\mbox{-}B$-bimodule and $\Lambda=\left( \begin{array}{cc} A&M\\0&B \end{array}\right)$. If $\pd M_B<\infty$ and $\pd_A M<\infty$ or $\ind_A M<\infty$, then $\Lambda$ is Gorenstein if and only if $A$ and $B$ are Gorenstein.
\end{corollary}
\begin{proof}
If $\Lambda$ is Gorenstein, then $D_{def}(\Lambda)=0$. Since $\tilde{i}_*$ and $\tilde{j}_!$ are full embeddings, we get that $D_{def}(A)=0=D_{def}(B)$, which implies that $A$ and $B$ are Gorenstein algebras.

If $A$ and $B$ are Gorenstein, then it follows from Theorem \ref{theorem characterize of gorenstein property} that $D_{def}(\Lambda)=0$ and so $\Lambda$ is Gorenstein.
\end{proof}

Let $A_1$ and $A_2$ be Artin algebras, and $F_1:\mod A_1\rightarrow \mod A_2$ be an additive functor. For any $X\in\mod A_1$, we call that $X$ \emph{has the property $(*)$ relative to $F_1$} if $X$ satisfies that $\cdots \xrightarrow{F_1(d_2)} F_1(P_1)\xrightarrow{F_1(d_1)} F_1(P_0) \rightarrow F_1(X)\rightarrow0$ is a projective resolution of
$F_1(X)$ for any projective resolution $\cdots \xrightarrow{d_2} P_1\xrightarrow{d_1} P_0 \rightarrow X\rightarrow0$ of $X$.

\begin{lemma}\label{lemma extension adjoint functor}
Let $A_1$ and $A_2$ be Artin algebras, and $F_1:\mod A_1\rightarrow \mod A_2$ be a functor with the property that it preserves projective objects and admits a right adjoint functor $F_2$.
For any $X\in\mod A_1$, if $X$ has the property $(*)$ relative to $F_1$, then $\Ext_{A_2}^k(F_1(X),Y) \cong \Ext_{A_1}^k (X,F_2(Y))$ for any $Y\in \mod A_2$ and $k\geq1$.
\end{lemma}
\begin{proof}
We denote by $X_i$ the kernel of $d_i$ for any $i>0$. Then $0\rightarrow X_1\rightarrow P_0\rightarrow X\rightarrow0$ is a short exact sequence.
For any $Y\in\mod A_2$, we have a long exact sequence
$$0\rightarrow \Hom_{A_1}( X,F_2(Y)) \rightarrow \Hom_{A_1}( P_0,F_2(Y)) \rightarrow \Hom_{A_1}( X_1,F_2(Y))\rightarrow \Ext_{A_1}^1( X,F_2(Y))\rightarrow0.$$

From the assumption, we get that $0\rightarrow F_1(X_1)\rightarrow F_1(P_0)\rightarrow F_1(X)\rightarrow0$ is also exact with $F_1(P_0)$ projective. So
 we have a long exact sequence
 $$0\rightarrow \Hom_{A_2}( F_1(X),Y) \rightarrow \Hom_{A_2}( F_1(P_0),Y) \rightarrow \Hom_{A_2}( F_1(X_1),Y)\rightarrow \Ext_{A_2}^1( F_1(X),Y)\rightarrow0.$$
From the naturality of the adjoint pair $(F_1,F_2)$ we get that $\Ext_{A_2}^1(F_1(X),Y) \cong \Ext_{A_1}^1 (X,F_2(Y))$ for any $Y\in \mod A_2$.

For $k>1$, it is easy to see that $\Ext^k_{A_1}(X, F_2(Y)) \cong \Ext^{k-1}_{A_1}(X_1,F_2(Y))$, and $\Ext^k_{A_2}(F_1(X),Y) \cong \Ext^{k-1}_{A_2}(F_1(X_1),Y)$.
Since $X_i$ has the property $(*)$ relative to $F_1$ for each $i$, inductively, we get
$$\Ext^{k-1}_{A_1}(X_1,F_2(Y))\cong \Ext^{k-1}_{A_2}(F_1(X_1),Y),$$ which implies that
$\Ext^k_{A_1}(X, F_2(Y))\cong \Ext^k_{A_2}(F_1(X),Y)$.
\end{proof}

\begin{lemma}\label{lemma adjoint functor Gorenstein projective}
Let $A_1$ and $A_2$ be Artin algebras, and $F_1:\mod A_1\rightarrow \mod A_2$ be a functor with the property that it preserves projective objects and admits a right adjoint functor $F_2$.

(a) If $F_1$ satisfies the following two conditions,
 \begin{itemize}
\item[(i)] $\Ext^k_{A_1} (X,F_2(Q))=0$ for any projective $A_2$-module $Q$, any Gorenstein projective $A_1$-module $X$ and any $k>0$;
 \item[(ii)] for any $X_1\in \Gp(A_1)$ and short exact sequence $0\rightarrow X_2\rightarrow P\rightarrow X_1\rightarrow0$ in $\mod A_1$ with $P$ projective, we have that $0\rightarrow F_1(X_2)\rightarrow F_1(P)\rightarrow F_1(X_1)\rightarrow0$ is also exact,
\end{itemize}
then $F_1(\Gp(A_1))\subseteq \Gp(A_2)$.

(b) If $F_1$ is exact or its restriction to $\Gp(A_1)$ is exact, and $\pd F_2(Q)<\infty$ or $\ind F_2(Q)<\infty$ for any indecomposable projective $A_2$-module $Q$, then $F_1(\Gp(A_1)) \subseteq \Gp(A_2)$.
\end{lemma}
\begin{proof}
(a) For any $X\in \Gp(A_1)$, let $0\rightarrow X_2\rightarrow P\rightarrow X\rightarrow0$ be a short exact sequence with $P$ projective. Then $X_2$ is a Gorenstein projective module. From the assumption, we have that $0\rightarrow F_1(X_2)\rightarrow F_1(P)\rightarrow F_1(X)\rightarrow0$ is also exact and $F_1(P)$ is projective.
Then $X$ satisfies the property $(*)$ relative to $F_1$.
So for any projective $A_2$-module $Q$ and Gorenstein $A_1$-module $X$, Lemma \ref{lemma extension adjoint functor} shows that $\Ext^k_{A_2}(F_1(X),Q)\cong \Ext^k_{A_1}(X,F_2(Q))=0$ for any $k>0$.
Since $X$ is Gorenstein projective, Lemma \ref{lemma property of Gorenstein projective modules} (a) shows that there is an exact sequence
$$0\rightarrow X\rightarrow P_0 \xrightarrow{d_0} P_1 \xrightarrow{d_1} P_2\xrightarrow{d_2}\cdots$$
with $P_i\in\proj A_1$, $\ker d_i \in \,^\bot A_1$, for any $i\geq0$. In particular, $\ker d_i\in \Gp(A_1)$. From above, we get that $F_1(\ker d_i) \in \,^\bot  A_2$.
By the assumption,
$$0\rightarrow F_1(X)\rightarrow F_1(P_0) \xrightarrow{F_1(d_0)} F_1(P_1) \xrightarrow{F_1(d_1)} F_1(P_2) \xrightarrow{F_1(d_2)} \cdots $$ is exact with
$\ker (F_1(d_i)) =F_1(\ker d_i) \in \, ^\bot A_2$. So $F_1(X)\in \Gp(A_2)$ by Lemma \ref{lemma property of Gorenstein projective modules} (a).

(b) If $F_1$ is exact or its restriction to $\Gp(A_1)$ is exact, and $\pd F_2(Q)<\infty$ or $\ind F_2(Q)<\infty$ for any indecomposable projective $A_2$-module $Q$, it is easy to see that $F_1$ satisfies the conditions stated in (a) by Lemma \ref{lemma property of Gorenstein projective modules} (b), which yields the result immediately.
\end{proof}

\begin{theorem}\label{theorem recollement of Gorenstein defect categories}
Let $A$ and $B$ be Artin algebras, $_AM_B$ an $A\mbox{-}B$-bimodule such that $M$ as a left $A$-module projective and as a right $B$-module has finite projective dimension, and $\Lambda=\left( \begin{array}{cc} A&M\\0&B \end{array}\right)$.
If $\Hom_A(M,A) \in (\Gproj B)^\bot$, then $D_{def}(\Lambda)$ admits a recollement relative to $D_{def}(A)$ and $D_{def}(B)$. In particular, if $\pd_B\Hom_A(M,P)<\infty$ or $\ind_B\Hom_A(M,P)<\infty$ for every indecomposable projective $A$-module $P$, then $D_{def}(\Lambda)$ admits a recollement relative to $D_{def}(A)$ and $D_{def}(B)$.
\end{theorem}
\begin{proof}
We prove it by applying Theorem \ref{theorem 1} to Theorem \ref{recollement of derived categories}.

Since $_A M$ is projective, it is easy to see that $i^!$ preserves projectives. Lemma \ref{lemma adjoint pair} shows that $i^!$ admits a right adjoint functor $i_?:\mod A\rightarrow \mod \Lambda$ and $i_? (P)\mapsto \left( \begin{array}{cc} P\\ F(P) \end{array} \right)_{\psi_P}$, where $\psi_P=\alpha^{-1}_{P,F(P)}(\Id_{F(P)})$ for any indecomposable projective $A$-module $P$. Obviously, there is a short exact sequence
\begin{equation}\label{equation b}
0\rightarrow \left( \begin{array}{ccc} P\\ 0 \end{array} \right) \rightarrow \left( \begin{array}{ccc} P\\ F(P) \end{array} \right)_{\psi_P}\rightarrow \left( \begin{array}{ccc} 0\\ F(P) \end{array} \right)\rightarrow0.
\end{equation}
For any Gorenstein projective $\Lambda$-module $\left(\begin{array}{cc} X\\ Y   \end{array}  \right)_{\phi}$, in combination with Lemma \ref{lemma zhang}, we know that $\phi$ is monic, $Y$ is Gorenstein projective $B$-module, and $\Coker(\phi)$ is a Gorenstein projective $A$-module. So there is a short exact sequence in $\Gp(\Lambda)$:
\begin{equation}\label{equation a}
0\rightarrow \left(\begin{array}{cc} G(Y)\\ Y   \end{array}  \right)_{\id} \rightarrow \left(\begin{array}{cc} X\\ Y   \end{array}  \right)_{\phi} \rightarrow \left(\begin{array}{cc} \Coker(\phi)\\ 0   \end{array}  \right)\rightarrow0.
\end{equation}
Since $i_*$ is exact and preserves projectives, we get that $\Coker (\phi)$ has the property $(*)$ relative to $i_*$. Then Lemma \ref{lemma extension adjoint functor} yields that
\begin{eqnarray*}\Ext^k_\Lambda( \left(\begin{array}{cc} \Coker(\phi)\\ 0   \end{array}  \right), \left( \begin{array}{ccc} 0\\ F(P) \end{array} \right))&=&\Ext_\Lambda^k(i_*(\Coker (\phi)), \left( \begin{array}{ccc} 0\\ F(P) \end{array} \right) )\\
&\cong&\Ext_A^k(\Coker (\phi), i^!(\left( \begin{array}{ccc} 0\\ F(P) \end{array} \right) ))=0, \mbox{ for any }k>0,
\end{eqnarray*}
since $i^!(\left( \begin{array}{ccc} 0\\ F(P) \end{array} \right) )=0$.
Similarly, from the proof of Proposition \ref{proposition left recollement}, we get that the restriction of $j_!$ to $\Gp(B)$ is exact and preserves projectives, so for any $k>0$, Lemma \ref{lemma extension adjoint functor} shows that
\begin{eqnarray*}\Ext^k_\Lambda( \left(\begin{array}{cc} G(Y)\\ Y   \end{array}  \right)_{\id}, \left( \begin{array}{ccc} 0\\ F(P) \end{array} \right))&=&\Ext_\Lambda^k(j_!(Y), \left( \begin{array}{ccc} 0\\ F(P) \end{array} \right) )\\
&\cong&\Ext_B^k(Y, j^*(\left( \begin{array}{ccc} 0\\ F(P) \end{array} \right) ))\\
&\cong& \Ext_B^k(Y, F(P)),
\end{eqnarray*}
which are all zero since $Y$ is a Gorenstein projetive $B$-module and $F(P)\in (\Gproj B)^\bot$.

By applying $\Hom_\Lambda(-, \left( \begin{array}{ccc} 0\\ F(P) \end{array} \right))$ to the exact sequence (\ref{equation a}), we get that
$$\Ext^k_\Lambda( \left(\begin{array}{cc} X\\ Y   \end{array}  \right)_{\phi}, \left( \begin{array}{ccc} 0\\ F(P) \end{array} \right))=0\mbox{ for any }k>0.$$
Since $\left( \begin{array}{ccc} P\\ 0\end{array} \right)$ is projective and $\left(\begin{array}{cc} X\\ Y   \end{array}  \right)_{\phi}$ is Gorenstein projective, $\Ext^k_\Lambda( \left(\begin{array}{cc} X\\ Y   \end{array}  \right)_{\phi}, \left( \begin{array}{ccc} P\\ 0\end{array} \right))=0$ for any $k>0$. From them, by applying $\Hom_\Lambda( \left(\begin{array}{cc} X\\ Y   \end{array}  \right)_{\phi},  -)$ to the exact sequence (\ref{equation b}), we get that
$$\Ext^k_\Lambda( \left(\begin{array}{cc} X\\ Y   \end{array}  \right)_{\phi}, \left( \begin{array}{ccc} P\\ F(P) \end{array} \right)_{\psi_P})=0\mbox{ for any }k>0.$$
So Lemma \ref{lemma adjoint functor Gorenstein projective} (a) yields that $i^!(\Gp(\Lambda))\subseteq \Gp(A)$.
In combination with $i^!$ is an exact functor, we get that $D^b(i^!)(\langle \Gp(\Lambda)\rangle)\subseteq \langle \Gp(A)\rangle$.

From the recollement in Theorem \ref{recollement of derived categories}, for any $Z\in \Gp(\Lambda)$, there is a triangle in $D^b(\mod\Lambda)$: $i_*i^! Z \rightarrow Z\rightarrow j_*j^* Z\rightarrow i_*i^! Z [1]$. Together with $i_*(\Gp(A))\subseteq \Gp(\Lambda)$, we get that $i_*i^! Z\in \Gp(\Lambda)$ by the above and then $j_*j^*Z\in \langle \Gp(\Lambda)\rangle$. Furthermore, $D^b(j_*)D^b(j^*)(\langle \Gp(\Lambda)\rangle )\subseteq \langle \Gp(\Lambda)\rangle$ since $j^*,j_*$ are exact functors. In combination with Proposition \ref{proposition left recollement}, we get that there is a
recollement of Gorenstein defect categories by applying Theorem \ref{theorem 1} to the recollement in Theorem \ref{recollement of derived categories}:
\[\xymatrix{
D_{def}(A) \ar[r]^{\tilde{i}_*}& D_{def}(\Lambda)
\ar@<2ex>[l]^{\tilde{i}^!}\ar@<-3ex>[l]_{\tilde{i}^*}\ar[r]^{\tilde{j}^*}& D_{def}(B).
\ar@<2ex>[l]^{\tilde{j}_*}\ar@<-3ex>[l]_{\tilde{j}_!}  }\]

For the last statement, if $\pd_B\Hom_A(M,P)<\infty$ or $\ind_B\Hom_A(M,P)<\infty$ for every indecomposable projective $A$-module $P$, then $\Hom_A(M,P)\in (\Gproj B)^\bot$ by Lemma \ref{lemma property of Gorenstein projective modules} (b), which implies that $\Hom_A(M,A)\in (\Gproj B)^\bot$, and the desired result follows immediately.

\end{proof}

\begin{corollary}
Let $A$ and $B$ be Artin algebras, $_AM_B$ a $A\mbox{-}B$-bimodule such that $M$ as a left $A$-module projective and as a right $B$-module has finite projective dimension, and $\Lambda=\left( \begin{array}{cc} A&M\\0&B \end{array}\right)$. If $M\otimes_B Y \in \,^\bot A$ for any Gorenstein projective $B$-module $Y$, then $D_{def}(\Lambda)$ admits a recollement relative to $D_{def}(A)$ and $D_{def}(B)$.
\end{corollary}
\begin{proof}
Since $_AM$ is projective, we get that the functor $G:\mod B\rightarrow \mod A$ preserves projectives.
For any Gorenstein projective $B$-module $Y$ and any projective resolution $\cdots \xrightarrow{d_2} Q_1\xrightarrow{d_1} Q_0 \rightarrow Y\rightarrow0$ of $Y$, we have
$\ker(d_i)\in\Gproj B$, and then $D\mathrm{Tor}^B_1(M,\ker d^i)\cong \Ext_B^1(\ker d^i, DM)\cong0$ by Lemma \ref{lemma property of Gorenstein projective modules} (b), since $\ind_B DM=\pd M_B<\infty$, where $D=\Hom_R(-,E)$ is the duality functor. So $\cdots \xrightarrow{G(d_2)} G(P_1)\xrightarrow{G(d_1)} G(P_0) \rightarrow G(Y)\rightarrow0$ is a projective resolution of
$G(Y)$, and then $Y$ has the property $(*)$ relative to $G$. Together with $(G,F)$ is an adjoint pair, Lemma \ref{lemma extension adjoint functor} yields that
$$ \Ext_B^k(Y, F(A))\cong \Ext_A^k(G(Y), A)=0$$
for any $k>0$ since $G(Y)\in\,^\bot A$. Therefore, $\Hom_A(M, A)\in (\Gproj B)^\bot$, and our desired result follows directly from Theorem \ref{theorem recollement of Gorenstein defect categories}.

\end{proof}

The following corollaries follow from Theorem \ref{theorem recollement of Gorenstein defect categories} immediately.
\begin{corollary}
Let $A$ and $B$ be Artin algebras, $_AM_B$ a $A\mbox{-}B$-bimodule such that $M$ as a left $A$-module projective and as a right $B$-module has finite projective dimension, and $\Lambda=\left( \begin{array}{cc} A&M\\0&B \end{array}\right)$. If $\pd_B \Hom_A(M,A)<\infty$, or $\ind_B\Hom_A(M,A)<\infty$, then $D_{def}(\Lambda)$ admits a recollement relative to $D_{def}(A)$ and $D_{def}(B)$.
\end{corollary}

\begin{corollary}\label{corollary special triangle matrix algebra}
Let $A$ be an Artin algebra. Then $D_{def}(\left( \begin{array}{cc} A&A\\0&A \end{array}\right))$ admits a recollement relative to $D_{def}(A)$ and $D_{def}(A)$.
\end{corollary}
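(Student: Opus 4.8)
The corollary asks us to specialize Theorem~\ref{theorem recollement of Gorenstein defect categories} to the case $A=B$ and $M={}_AA_A$. The plan is therefore very short: verify that the hypotheses of the theorem are met in this situation, and read off the conclusion.

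First I would check the standing hypotheses: we need ${}_AM$ and $M_B$ to be finitely generated projective. Here $M={}_AA_A$, which is obviously finitely generated and projective both as a left and as a right $A$-module, so this is automatic. Next, the decisive condition is $\pd_B \Hom_A(M,A)<\infty$. With $M={}_AA$ we have the canonical isomorphism $\Hom_A({}_AA,{}_AA)\cong A$ as a right $A$-module, hence $\pd_B\Hom_A(M,A)=\pd_{A}A_A=0<\infty$. So all hypotheses of Theorem~\ref{theorem recollement of Gorenstein defect categories} are satisfied with $A=B$ and $\Lambda=\left(\begin{array}{cc}A&A\\0&A\end{array}\right)$.

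Applying the theorem then yields directly that $D_{def}(\Lambda)=D_{def}\left(\left(\begin{array}{cc}A&A\\0&A\end{array}\right)\right)$ admits a recollement relative to $D_{def}(A)$ and $D_{def}(A)$, which is precisely the assertion of the corollary.

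There is essentially no obstacle here: the only thing worth spelling out carefully is the identification $\Hom_A({}_AA_A,{}_AA_A)\cong {}_AA_A$ as an $A$-$A$-bimodule (so that the right $A$-module structure used in the hypothesis $\pd_B\Hom_A(M,A)<\infty$ is the standard one and has projective dimension zero). Everything else is a verbatim specialization of the already-proven theorem, so the proof is a single line once these identifications are noted.
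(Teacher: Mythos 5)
Your proof is correct: it is the immediate specialization of Theorem~\ref{theorem recollement of Gorenstein defect categories} to $A=B$ and $M={}_AA_A$, which is precisely the intended argument (the paper records the corollary without a separate proof). A tiny point of hygiene: in the paper's conventions $F=\Hom_A(M,-)$ takes values in \emph{left} $B$-modules, so $\pd_B\Hom_A(M,A)$ is the projective dimension of ${}_B\Hom_A(M,A)$, not of a right module as you wrote; since $A$ is projective over itself on either side this does not affect the conclusion $\pd_B\Hom_A(M,A)=0<\infty$.
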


\begin{example}\label{example 2}
If $_AM$ is not projective, then the recollement of Gorenstein defect categories obtained in the proof of Theorem \ref{theorem recollement of Gorenstein defect categories} does not exist in general.

Let $K$ be a field and $Q$ be the following quiver
\[\xymatrix{ 4 \ar@/^/[r]^{\alpha_1}  \ar[d]_{\gamma_1} & 5 \ar@/^/[r]^{\alpha_2}\ar@/^/[l]^{\beta_1}  \ar[d]^{\gamma_2}& 6 \ar@/^/[l]^{\beta_2} & 7\ar[l]^{\beta_3}  \\
1  \ar@/^/[r]^{\theta_1} & 2 \ar[r]^{\theta_2} \ar@/^/[l]^{\delta_1} & 3 . }\]
Let $\Lambda=KQ/I$ where $I$ is the ideal of $KQ$ generated by $\alpha_1\beta_1$, $\beta_1\alpha_1$, $\alpha_2\beta_2$, $\beta_2\alpha_2$, $\beta_2\beta_3$, $\theta_1\delta_1$, $\delta_1\theta_1$, $\theta_2\theta_1$, $\theta_2\gamma_2$, $\gamma_2\alpha_1-\theta_1\gamma_1$ and $\delta_1 \gamma_2-\gamma_1\beta_1$.
Denote by $e_i$ the idempotent corresponding to the vertex $i$ for $1\leq i\leq 7$. Set $e=e_1+e_2+e_3$, $A=e\Lambda e$ and $B=(1-e)\Lambda(1-e)$.
Then $\Lambda= \left( \begin{array}{cc} A&M\\0&B \end{array}\right)$. Obviously, $\pd_A M=1$ and $M_B$ is projective.

Let $S_i$ be the simple $\Lambda$-module corresponding to $i$. We can view $S_i$ as the simple $A$-module (resp. $B$-module) for $1\leq i\leq 3$ (resp. $4\leq i\leq 7$).
From \cite{chen3} or the Auslander-Reiten quiver of $B$, we get that the indecomposable Gorenstein projective $B$-modules are precisely $S_4$ and the string module $N$ corresponding to the string $5\xrightarrow{\alpha_2} 6$. $j_!(S_4)$ (resp. $j_!(N)$) is the string module corresponding to the string $4\xrightarrow{\gamma_1} 1$ (resp. $2\xleftarrow{\gamma_2} 5\xrightarrow{\alpha_2}6$).

Suppose for a contradiction that the recollement of Gorenstein defect categories obtained in the proof of Theorem \ref{theorem recollement of Gorenstein defect categories} exists. Then both $i^! j_!(S_4)= S_1$ and $i^! j_!(N)= S_2$ are in $\langle \Gp(A)\rangle$. Since $S_3$ is projective, all the simple $A$-modules are in $\langle \Gp(A)\rangle$.
Since the thick subcategory of $D^b(A)$ consisting of all the simple $A$-modules is $D^b(A)$, we get that $D^b(A)=\langle \Gp(A)\rangle$, which implies that $A$ is Gorenstein. However, it is easy to check that $A$ is not Gorenstein, which yields a contradiction.
\end{example}

\begin{example}
Even though $_AM$ and $M_B$ are projective, if $\Hom_A(M,A) \notin (\Gproj B)^\bot$, or in particular, neither $\pd_B\Hom_A(M,P)<\infty$ nor $\ind_B\Hom_A(M,P)<\infty$ for some indecomposable projective $A$-module $P$, then the recollement of Gorenstein defect categories obtained in the proof of Theorem \ref{theorem recollement of Gorenstein defect categories} does not exist in general.

Let $K$ be a field and $Q$ be the quiver as in Example \ref{example 2}.
Let $\Lambda=KQ/I$ where $I$ is the ideal of $KQ$ generated by $\alpha_1\beta_1$, $\beta_1\alpha_1$, $\alpha_2\beta_2$, $\beta_2\alpha_2$, $\beta_2\beta_3$, $\theta_1\delta_1$, $\delta_1\theta_1$, $\theta_2\theta_1$, $\gamma_2\alpha_1-\theta_1\gamma_1$ and $\delta_1 \gamma_2-\gamma_1\beta_1$.
Denote by $e_i$ the idempotent corresponding to the vertex $i$ for $1\leq i\leq 7$. Set $e=e_1+e_2+e_3$, $A=e\Lambda e$ and $B=(1-e)\Lambda(1-e)$.
Then $\Lambda= \left( \begin{array}{cc} A&M\\0&B \end{array}\right)$. Obviously, $_A M$ and $M_B$ are projective.

Keep the notations as in Example \ref{example 2}.
Then the indecomposable Gorenstein projective $B$-modules are precisely $S_4$ and the string module $N$. $j_!(S_4)$ (resp. $j_!(N)$) is the string module corresponding to the string $4\xrightarrow{\gamma_1} 1$ (resp. $3\xleftarrow{\theta_2}2\xleftarrow{\gamma_2} 5\xrightarrow{\alpha_2}6$).

Easily, $A$ is representation-finite, and from its Auslander-Reiten quiver, we get that
$$\Ext_B^2(S_4,\Hom_A(M,\,_AP_2))= \Ext^2_A(M\otimes_B S_4,\, _AP_2 )= \Ext^2_A(_AS_1,\,_AP_2)\neq0,$$
where $_AP_2$ is the indecomposable projective $A$-module corresponding to the vertex $2$, and $_A S_1$ is the simple $A$-module corresponding to the vertex $1$.
So $\Hom_A(M,A) \notin (\Gproj B)^\bot$.

Suppose for a contradiction that the recollement of Gorenstein defect categories obtained in the proof of Theorem \ref{theorem recollement of Gorenstein defect categories} exists. Then $i^! j_!(S_4)= \,_AS_1$ and $i^! j_!(N)$ satisfy that both of them are in $\langle \Gp(A)\rangle$, where $i^! j_!(N)$ is the string module with its string $3\xleftarrow{\theta_2}2$. Since $S_3$ is projective, similar to Example \ref{example 2}, it is easy to see that all the simple $A$-modules are in $\langle \Gp(A)\rangle$, which implies that $A$ is Gorenstein. However, $A$ is not Gorenstein, which yields a contradiction.
\end{example}

\section{Application: simple gluing algebras}
In this section, let us consider a special kind of triangular matrix algebras. We always assume that $K$ is a field.
Let $A=KQ_A/I_A$ and $B=KQ_B/I_B$ be two finite-dimensional bound quiver algebras. Fix vertices $v_1,\dots,v_m\in Q_A$, $w_1,\dots,w_n\in Q_B$ for some integers $m,n>0$. We define a new quiver $Q$ from $Q_A$ and $Q_B$ by adding $a_{ij}$ ($a_{ij}\geq0$) arrows from $w_j$ to $v_i$ for any $1\leq i\leq m$, $1\leq j\leq n$. In this way, we can view $Q_A$ and $Q_B$ as the full subquivers of $Q$, and call $Q$ a \emph{simple gluing quiver} of $Q_A$ and $Q_B$. Let $I=\langle I_A,I_B\rangle$, and $\Lambda=KQ/I$. Then $\Lambda$ is finite-dimensional, and we call $\Lambda$ a \emph{simple gluing algebra} of $A$ and $B$.

\setlength{\unitlength}{0.5mm}
\begin{center}
\begin{picture}(100,50)(0,-20)
\qbezier(-5,0)(0,30)(40,10)
\qbezier(-5,0)(0,-30)(40,-10)
\qbezier(40,10)(55,0)(40,-10)
\put(40,0){\circle*{2}}

\qbezier(70,10)(55,0)(70,-10)
\qbezier(70,10)(110,30)(115,0)
\qbezier(70,-10)(110,-30)(115,0)
\put(70,0){\circle*{2}}
\put(40,0){\vector(1,0){29}}

\put(33,-5){$w_1$}
\put(72,-5){$v_1$}
\put(50,2){\small$a_{11}$}
\put(10,-5){$Q_B$}
\put(90,-5){$Q_A$}

\put(-30,-31){Figure 1. Simple gluing quiver of $Q_A$ and $Q_B$}
\put(1,-40){  for the case $m=1=n$.}
\end{picture}
\vspace{1cm}
\end{center}

For a simple gluing algebra $\Lambda$ of $A$ and $B$, obviously, $\Lambda=\left( \begin{array}{cc} A&M\\0&B \end{array}\right)$ with $_A M$ and $M_B$ projective.
So we get the Gorenstein property of $\Lambda$ by the following general result.

\begin{lemma}[\cite{Chen1}]\label{Gorenstein dimension of upper triangular matrix algebra}
Let $A$ and $B$ be two Gorenstein algebras, $_AM_B$ an $A\mbox{-}B$-bimodule such that $_AM$ and $M_B$ are projective, and $\Lambda=\left( \begin{array}{cc} A&M\\0&B \end{array}\right)$. Then $\Lambda$ is Gorenstein. In particular, if $\Gd A\neq\Gd B$, then $\Gd (\Lambda)=\max\{\Gd A,\Gd B\}$;
otherwise, $\Gd ( \Lambda)\leq \Gd A+1$.
\end{lemma}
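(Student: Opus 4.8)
The plan is to treat the two assertions separately. Since ${}_AM$ and $M_B$ are projective we have $\pd{}_AM=\pd M_B=0$, so the corollary to Theorem~\ref{proposition Gorenstein defect equivalent} (equivalently, \cite[Theorem~3.3]{Chen1}) already gives that $\Lambda$ is Gorenstein; hence $\Gd\Lambda$ is defined and, by \cite[Lemma~6.9]{Ha1}, equals $\ind_\Lambda{}_\Lambda\Lambda$. Reading off the columns of $\Lambda$ yields ${}_\Lambda\Lambda\cong i_*(A)\oplus j_!(B)$ with $i_*,j_!$ the functors of Theorem~\ref{recollement of abelian categories}, so it suffices to bound $\ind_\Lambda i_*(A)$ and $\ind_\Lambda j_!(B)$ from above and to establish the matching lower bound $\Gd\Lambda\ge\max\{\Gd A,\Gd B\}$.

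The decisive point is that, because ${}_AM$ is finitely generated projective, $F=\Hom_A(M,-)$ is exact and carries injective $A$-modules to injective $B$-modules: $\Hom_A(M,D(A))\cong D(M_B)$ is injective since $M_B$ is finitely generated projective, and every injective $A$-module is a summand of a power of $D(A)$. Consequently the exact functor $i_?\colon X\mapsto\left(\begin{smallmatrix}X\\ F(X)\end{smallmatrix}\right)_{\psi_X}$ (as in the proof of Theorem~\ref{theorem recollement of Gorenstein defect categories}) carries injective $A$-modules to injective $\Lambda$-modules, so $\ind_\Lambda i_?(X)\le\ind_A X$; since moreover $i^!i_?\cong\id$ and the exact functor $i^!$ takes the injective $\Lambda$-modules — namely the $\left(\begin{smallmatrix}I\\ \Hom_A(M,I)\end{smallmatrix}\right)$ ($I$ injective over $A$) and the $\left(\begin{smallmatrix}0\\ J\end{smallmatrix}\right)$ ($J$ injective over $B$) — back to injective $A$-modules, we get the equality $\ind_\Lambda i_?(X)=\ind_A X$, and similarly $\ind_\Lambda j_*(Y)=\ind_B Y$ via $j_*$ and $j^*$. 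Finally, if $X$ is a projective $A$-module then $\ind_A X\le\Gd A$, so applying $F$ to an injective resolution of $X$ gives $\ind_B F(X)\le\ind_A X\le\Gd A$; having finite injective dimension over the Gorenstein algebra $B$, it also satisfies $\ind_B F(X)\le\Gd B$, because over a $d$-Gorenstein algebra finite injective dimension is automatically $\le d$ (this follows from $\Ginj B=\Omega^{-\Gd B}(\mod B)$). Thus $\ind_B F(X)\le\min\{\Gd A,\Gd B\}$ for projective $X$.

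I would then exploit the two natural short exact sequences of $\Lambda$-modules
\[
0\to i_*(X)\to i_?(X)\to j_*F(X)\to 0,\qquad 0\to i_*(M\otimes_B Q)\to j_!(Q)\to j_*(Q)\to 0,
\]
valid for all ${}_AX$ and ${}_BQ$ and immediate from the bimodule data (e.g. $i_*(X)=\left(\begin{smallmatrix}X\\ 0\end{smallmatrix}\right)$ is the kernel in the first one since the evaluation map there lands in the zero component). The first gives $\ind_\Lambda i_*(X)\le\max\{\ind_A X,\ \ind_B F(X)+1\}$, hence for projective $X$, $\ind_\Lambda i_*(X)\le\max\{\Gd A,\ \min\{\Gd A,\Gd B\}+1\}$; feeding this into the second (using $\ind_\Lambda j_*(Q)=\ind_B Q\le\Gd B$ and that $M\otimes_B Q$ is a projective $A$-module) yields the same bound for $\ind_\Lambda j_!(Q)$. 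Therefore $\Gd\Lambda=\ind_\Lambda{}_\Lambda\Lambda\le\max\{\Gd A,\Gd B,\ \min\{\Gd A,\Gd B\}+1\}$, which equals $\max\{\Gd A,\Gd B\}$ when $\Gd A\ne\Gd B$ and is $\le\Gd A+1$ when $\Gd A=\Gd B$. For the reverse inequality, $i_?(A)$ and $j_*(B)$ are $\Lambda$-modules of finite injective dimension (equal to $\Gd A$ and $\Gd B$), so over the Gorenstein algebra $\Lambda$ their injective dimension is $\le\Gd\Lambda$, giving $\Gd\Lambda\ge\max\{\Gd A,\Gd B\}$ and completing the proof.

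The step I expect to be the main obstacle is the sharp equality $\Gd\Lambda=\max\{\Gd A,\Gd B\}$ in the case $\Gd A\ne\Gd B$: the short exact sequence alone only yields $\ind_\Lambda i_*(X)\le\Gd A+1$, and it is precisely the refined estimate $\ind_B\Hom_A(M,X)\le\min\{\Gd A,\Gd B\}$ for projective $X$ that absorbs the extra $+1$ as soon as one of $\Gd A,\Gd B$ strictly dominates. The remaining items — exactness of the two displayed sequences in $\mod\Lambda$ and the fact that $i^!$ and $j^*$ preserve injectives — are routine verifications.
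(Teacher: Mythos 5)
Your proposal is correct, and it takes a genuinely different route from the paper's. The paper argues on \emph{both} sides of the $\Lambda$-dualities: it bounds the projective dimension of each indecomposable injective $\Lambda$-module by $\max\{\Gd A,\Gd B+1\}$ via the two short exact sequences $0\to(GI,0)\to(GI,I)_{\id}\to(0,I)\to0$ and $0\to(J,0)\to(J,FJ)_{\psi_J}\to(0,FJ)\to0$, then invokes the dual argument to bound the injective dimension of each projective by $\max\{\Gd A+1,\Gd B\}$, and obtains $\Gd\Lambda\le\min\{\max\{\Gd A,\Gd B+1\},\max\{\Gd A+1,\Gd B\}\}$, which is the stated bound. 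You instead work only with projectives, but you refine the key estimate: rather than settling for $\ind_B F(X)\le\Gd B$, you observe that $F$ is exact and carries injectives to injectives, so $\ind_B F(X)\le\ind_A X\le\Gd A$ as well, giving $\ind_B F(X)\le\min\{\Gd A,\Gd B\}$; feeding this into the two short exact sequences (which are the same ones the paper uses, applied to projectives instead of injectives) yields the sharp bound $\max\{\Gd A,\Gd B,\min\{\Gd A,\Gd B\}+1\}$ in one pass. Your approach buys a self-contained upper bound without the explicit ``take the minimum with the dual estimate'' step; it also supplies an honest proof of the lower bound $\Gd\Lambda\ge\max\{\Gd A,\Gd B\}$ via the equalities $\ind_\Lambda i_?(A)=\Gd A$ and $\ind_\Lambda j_*(B)=\Gd B$, which the paper dismisses as ``easy to see.'' Two small remarks: (i) you could shorten the verification that $i_?$ preserves injectives by noting it is the right adjoint of the exact functor $i^!$; (ii) the Gorenstein-ness of $\Lambda$ is taken as input from Chen's Theorem 3.3 (or the paper's Corollary 3.9), whereas the paper reproves it in the course of establishing the bound — logically both are fine since neither introduces circularity.
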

\begin{proof}
The proof of this lemma is similar to \cite[Theorem 3.3]{Chen1}, we omit it here.
\end{proof}

By \cite[Theorem 2.5]{LL}, we get the following recollement:
\[\xymatrix{
D_{sg}(A) \ar[r]^{\tilde{i}_*}& D_{sg}(\Lambda)
\ar@<2ex>[l]^{\tilde{i}^!}\ar@<-3ex>[l]_{\tilde{i}^*}\ar[r]^{\tilde{j}^*}& D_{sg}(B)\quad (*)
\ar@<2ex>[l]^{\tilde{j}_*}\ar@<-3ex>[l]_{\tilde{j}_!}  }\]
where $\tilde{i}^*$, $\tilde{i}_*$, $\tilde{i}^!$, $\tilde{j}_!$, $\tilde{j}^*$ and $\tilde{j}_*$ are induced by the six structure functors in Theorem \ref{recollement of derived categories}.

For simple gluing algebra $\Lambda$, it is worth noting that all the six functors $i^*,i_*,i^!,j_!,j^*,j_*$ are exact functors. In fact, $i^*= A\otimes_\Lambda-$, together with that $A$ is projective as $\Lambda$-module, it is easy to see that $i^*$ is exact; Since $M_B$ is projective, we get that $G=M\otimes_B-$ is exact, from the definition of $j_!$, it is easy to see that $j_!$ is exact.

\begin{proposition}\label{lemma recollement splits}
Let $\Lambda$ be a simple gluing algebra of $A$ and $B$. Then
$$D_{sg}(\Lambda)\simeq D_{sg}(A)\coprod D_{sg}(B).$$
\end{proposition}
\begin{proof}
For any $\Lambda$-module $Z$, by the recollement $(*)$, there exists a triangle in $D_{sg}(\Lambda)$:
\begin{equation}\label{equation triangle}
\tilde{j}_! \tilde{j}^*Z\rightarrow Z\rightarrow\tilde{i}_*\tilde{i}^* Z\rightarrow \tilde{j}_! \tilde{j}^*Z[1],
\end{equation}
where $[1]$ is the suspension functor.

First, for any $Y\in \mod(B)$, let $(Y_i,Y_\alpha)$ be the corresponding representation of $B$. It is easy to see that $$M\otimes_B Y=\bigoplus_{j=1}^n (\bigoplus_{i=1}^m P_{v_i}^{\oplus a_{ij}\dim Y_{w_j}})\in\proj A,$$ 
where $P_{v_i}$ is the indecomposable projective $A$-module corresponding to the vertex $v_i$.
So
$$i^! j_!(Y)= i^!( \left( \begin{array}{cc} M\otimes_B Y\\Y \end{array} \right))=M\otimes_B Y,$$
which is projective.
Then $\tilde{i}^! \tilde{j}_!=0$ since $\mod B$ is a generator of $D^b(B)$ and $i^!,j_!$ are exact functors.

For (\ref{equation triangle}), by the recollement $(*)$, we know that $(\tilde{i}_*,\tilde{i}^!)$ is an adjoint pair, and then
$$\Hom_{D_{sg}(\Lambda)}(\tilde{i}_*\tilde{i}^* Z, \tilde{j}_! \tilde{j}^*Z[1] )\cong \Hom_{D_{sg}(A)}( \tilde{i}^* Z, \tilde{i}^!\tilde{j}_! \tilde{j}^*Z[1])=0,$$
which implies that $Z\cong \tilde{j}_! \tilde{j}^*Z\oplus \tilde{i}_*\tilde{i}^* Z$. In particular, from the above, we also get that
$$\Hom_{D_{sg}(\Lambda)}( \Im \tilde{i}_*,\Im \tilde{j}_!)=0.$$
On the other hand, it is easy to see that $\Hom_{D_{sg}(\Lambda)}(\Im \tilde{j}_!, \Im \tilde{i}_*)=0$ by the definition of recollement. So $D_{sg}(\Lambda)\simeq \Im \tilde{i}_* \coprod \Im \tilde{j}_!$ and then
$$D_{sg}(\Lambda)\simeq D_{sg}(A)\coprod D_{sg}(B),$$
since $\tilde{i}_*,\tilde{j}_!$ are full embeddings.
\end{proof}

Note that we can view $\underline{\Gproj}(\Lambda)$ as full subcategory of $D_{sg}(\Lambda)$ by Buchweitz's Theorem for any Artin algebra $\Lambda$, parallel to the above, we get the following lemma.

\begin{lemma}\label{lemma Gp split}
Let $\Lambda$ be a simple gluing algebra of $A$ and $B$. Then
$$\underline{\Gproj}(\Lambda)\simeq \underline{\Gproj}(A)\coprod\underline{\Gproj}(B).$$
Furthermore, for any indecomposable $\Lambda$-module $Z$, $Z$ is Gorenstein projective if and only if there exists an indecomposable Gorenstein projective $A$-module $X$ or $B$-module $Y$ such that $Z\cong i_*(X)$ or $Z\cong j_!(Y)$.
\end{lemma}
\begin{proof}
From the proof of Proposition \ref{proposition left recollement}, we get that $i^*,i_*,j_!,j^*$ preserve Gorenstein projective modules, so these functors induce triangulated functors on the stable categories of Gorenstein projective modules, which we also denote by $\tilde{i}^*,\tilde{i}_*,\tilde{j}_!,\tilde{j}^*$. For any Gorenstein projective $\Lambda$-module $\left( \begin{array}{cc} X\\Y \end{array} \right)_\phi$, we have that $\phi:M\otimes_BY\rightarrow X$ is monic, $\Coker{\phi}\in \Gproj A,Y\in \Gproj B$. So
there exists a triangle in $\underline{\Gproj}(\Lambda)$
\begin{equation}\label{equation triangle 2}
\left( \begin{array}{cc} M\otimes Y\\Y \end{array} \right)_{\Id} \rightarrow  \left( \begin{array}{cc} X\\Y \end{array} \right)_\phi \rightarrow \left( \begin{array}{cc} \Coker \phi\\0 \end{array} \right)_0\xrightarrow{f} \left( \begin{array}{cc} M\otimes Z\\Z \end{array} \right)_{\Id},
\end{equation}
where $Z$ is a Gorenstein projective $B$-module such that there is an exact sequence
$0\rightarrow Y \rightarrow Q_Y \rightarrow Z\rightarrow0$ with $Q_Y$ a projective $B$-module and $Z\in \Gproj(B)$.
Similar to the proof of Proposition \ref{lemma recollement splits}, we get that $f=0$ in $\underline{\Gproj}(\Lambda)$, and so
(\ref{equation triangle 2}) splits, which implies that
$$ \left( \begin{array}{cc} X\\Y \end{array} \right)_\phi\cong  \tilde{j}_! \tilde{j}^* (\left( \begin{array}{cc} X\\Y \end{array} \right)_\phi)\oplus \tilde{i}_*\tilde{i}^* (\left( \begin{array}{cc} X\\Y \end{array} \right)_\phi).$$

Furthermore, similar to the proof of Proposition \ref{lemma recollement splits}, we get that
$$\Hom_{\underline{\Gproj}(\Lambda)}( \Im \tilde{i}_*,\Im \tilde{j}_!)=0= \Hom_{\underline{\Gproj}(\Lambda)}(\Im \tilde{j}_!, \Im \tilde{i}_*),$$
so $\underline{\Gproj}(\Lambda)\simeq \Im \tilde{i}_* \coprod \Im \tilde{j}_!$ and then
$$\underline{\Gproj}(\Lambda)\simeq \underline{\Gproj}(A)\coprod \underline{\Gproj}(B),$$
since $\tilde{i}_*,\tilde{j}_!$ are full embeddings.

For the last statement, first, for any indecomposable projective $\Lambda$-module $U_i$ corresponding to the vertex $i$, if $i\in Q_A$, then it is easy to see that $i_*(P_i)=U_i$, where $P_i$ is the indecomposable projective $A$-module corresponding to the vertex $i$; otherwise, $j_!(Q_i)=U_i$, where $Q_i$ is the indecomposable projective $B$-module corresponding to the vertex $i$.

Second, for any indecomposable non-projective Gorenstein projective $\Lambda$-module $Z$, from $\underline{\Gproj}(\Lambda)\simeq \Im \tilde{i}_* \coprod \Im \tilde{j}_!$, it is either in $\Im \tilde{i}_* $ or in $\Im \tilde{j}_!$. Since $i_*$ and $j_!$ preserve projective modules and are full embeddings by Lemma \ref{recollement of abelian categories}, it is routine to check that there exists an indecomposable Gorenstein projective $A$-module $X$ or $B$-module $Y$ such that $Z\cong i_*(X)$ or $Z\cong j_!(Y)$.
\end{proof}

Now we get the main result of this section.
\begin{theorem}\label{theorem defect Gorenstein split}
Let $\Lambda$ be a simple gluing algebra of $A$ and $B$. Then
 $$D_{def}(\Lambda)\simeq D_{def}(A)\coprod D_{def}(B).$$
\end{theorem}
\begin{proof}
From Proposition \ref{lemma recollement splits} and Lemma \ref{lemma Gp split}, we get that
$$D_{sg}(\Lambda)\simeq D_{sg}(A)\coprod D_{sg}(B),\mbox{ and }\underline{\Gproj}(\Lambda)\simeq \underline{\Gproj}(A)\coprod\underline{\Gproj}(B).$$
In particular, both of the two equivalences are induced by the same functors $i_*,j_!$, so they are compatible, which implies the result by the definition of Gorenstein defect category immediately.
\end{proof}

\begin{example}\label{example 3}
For triangular matrix algebra $\Lambda= \left( \begin{array}{cc} A&M\\0&B \end{array}\right)$. Even though $M$ is projective both as left $A$-module and as right $B$-module, in general,
we do not have 
$$D_{sg}(\Lambda)\simeq D_{sg}(A)\coprod D_{sg}(B), \mbox{ or } \underline{\Gproj}(\Lambda)\simeq \underline{\Gproj}(A)\coprod\underline{\Gproj}(B).$$

Let $\Lambda=KQ_\Lambda/I_\Lambda$, where $Q_\Lambda$ is the quiver as Figure 2 shows, and $I_\Lambda=\langle \varepsilon_1^2,\varepsilon_2^2, \varepsilon_1\alpha-\alpha\varepsilon_2\rangle$. Let $A=e_1 \Lambda e_1$ and $B=e_2\Lambda e_2$ where $e_i$ is the idempotent corresponding to the vertex $i$ for $i=1,2$. Then
$\Lambda$ is of form $\left( \begin{array}{cc} A&M\\0&B \end{array}\right)$ with $M$ projective both as left $A$-module and as right $B$-module. Then all these algebras are Gorenstein algebras.
It is easy to see that $D_{sg}(A)\simeq D_{sg}(B)=\underline{\mod}(K[X]/\langle X^2\rangle)$, and $D_{sg}(\Lambda)\simeq D^b(KQ)/ [1]$, where $Q$ is a quiver of type $\A_2$, and $[1]$ is the suspension functor, see \cite{RZ}. However, $D_{sg}(\Lambda)$ is not equivalent to $D_{sg}(A)\coprod D_{sg}(B)$.

\begin{center}\setlength{\unitlength}{0.7mm}
 \begin{picture}(40,10)(0,10)

\put(0,-2){$1$}

\put(18,0){\vector(-1,0){14}}
\put(10,-4){$\alpha$}
\put(20,-2){$2$}

\qbezier(-1,1)(-3,3)(-2,5.5)
\qbezier(-2,5.5)(1,9)(4,5.5)
\qbezier(4,5.5)(5,3)(3,1)
\put(3.1,1.4){\vector(-1,-1){0.3}}

\qbezier(19,1)(17,3)(18,5.5)
\qbezier(18,5.5)(21,9)(24,5.5)
\qbezier(24,5.5)(25,3)(23,1)
\put(23.1,1.4){\vector(-1,-1){0.3}}

\put(1,10){$\varepsilon_1$}
\put(21,10){$\varepsilon_2$}

\put(-40,-13){Figure 2. The quiver $Q_\Lambda$ in Example \ref{example 3}. }
\end{picture}

\vspace{1.9cm}
\end{center}
\end{example}

\end{document}